\newtheorem{question}{Question}
\Crefname{invariant}{Invariant}{Invariants}
\Crefname{constraint}{Constraint}{Constraints}
\Crefname{perturbation}{Condition}{Conditions}
\newcommand{\lightdfn}[1]{\emph{#1}}
\newcommand{\dfn}[1]{\emph{#1}}
\newcommand{\TODO}[2][\empty]{{\normalfont\color{red}(\ifx #1\empty TODO:\else TODO #1:\fi\ {#2})}}
\newcommand{\bt}{\operatorname{\mathsf{bt}}}
\newcommand{\arb}{\operatorname{\mathsf{a}}}
\newcommand{\height}[1]{\operatorname{\mathrm{height}}(#1)}
\newcommand{\ycoord}[1]{\operatorname{\mathsf{y}}(#1)}
\newcommand{\xcoord}[1]{\operatorname{\mathsf{x}}(#1)}
\newcommand{\Ch}{\ensuremath{\mathsf{h}}}
\newcommand{\Chs}{\ensuremath{\mathsf{hs}}}
\newcommand{\Cv}{\ensuremath{\mathsf{v}}}
\newcommand{\Cvs}{\ensuremath{\mathsf{vs}}}
\newcommand{\typel}[1]{{\normalfont(#1-\textsc{left})}}
\newcommand{\typer}[1]{{\normalfont(#1-\textsc{right})}}
\newcommand{\typei}[1]{{\normalfont(#1-\textsc{int})}}
\newcommand{\typee}[1]{{\normalfont(#1-\textsc{ext})}}
\newcommand{\np}{{\normalfont\textsf{NP}}}
\title{On the geometric thickness of 2-degenerate graphs}
\author{Rahul Jain}{FernUniversität in Hagen, Germany}{rahul.jain@fernuni-hagen.de}{https://orcid.org/0000-0002-8567-9475}{}             
\author{Marco Ricci}{FernUniversität in Hagen, Germany}{marco.ricci@fernuni-hagen.de}{https://orcid.org/0000-0002-4502-8571}{}           
\author{Jonathan Rollin}{FernUniversität in Hagen, Germany}{jonathan.rollin@fernuni-hagen.de}{https://orcid.org/0000-0002-6769-7098}{}   
\author{André Schulz}{FernUniversität in Hagen, Germany}{andre.schulz@fernuni-hagen.de}{https://orcid.org/0000-0002-2134-4852}{}         
\authorrunning{R. Jain, M. Ricci, J. Rollin, and A. Schulz}
\keywords{Degeneracy, geometric thickness, geometric arboricity}
\begin{document}
\hideLIPIcs
\maketitle
\hypersetup{pdfsubject={}}

\begin{abstract}
	A graph is $2$-degenerate if every subgraph contains a vertex of degree at most $2$.
	We show that every $2$-degenerate graph can be drawn with straight lines such that the drawing decomposes into $4$ plane forests.
	Therefore, the geometric arboricity, and hence the geometric thickness, of $2$-degenerate graphs is at most $4$.
	On the other hand, we show that there are $2$-degenerate graphs that do not admit any straight-line drawing with a decomposition of the edge set into $2$ plane graphs.
	That is, there are $2$-degenerate graphs with geometric thickness, and hence geometric arboricity, at least $3$.
	This answers two questions posed by Eppstein [Separating thickness from geometric thickness. In \textit{Towards a Theory of Geometric Graphs}, vol.~342 of \textit{Contemp.\ Math.}, AMS, 2004].
\end{abstract}

\section{Introduction}

A graph is planar if it can be drawn without crossings on a plane.
Planar graphs exhibit many nice properties, which can be exploited to solve problems for this class more efficiently compared to general graphs.
However, in many situations, graphs cannot be assumed to be planar even if they are sparse.
It is therefore desirable to define graph classes which extend planar graphs.
Several approaches for extending planar graphs have been established over the last years~\cite{GraphDrawingBeyondPlanarity,BeyondPlanarSurvey}.
Often these classes are defined via drawings, for which the types of crossings and/or the number of crossings are restricted.
A natural way to describe how close a graph is to being a planar graph is provided by the graph parameter \lightdfn{thickness}.
The thickness of a graph $G$ is the smallest number $\theta(G)$ such that the edges of $G$ can be partitioned into $\theta(G)$ planar subgraphs of $G$.
Related graph parameters are \lightdfn{geometric thickness} and \lightdfn{book thickness}.
Geometric thickness was introduced by Kainen under the name \lightdfn{real linear thickness}~\cite{Kainen}.
The geometric thickness $\bar\theta(G)$ of a graph $G$ is the smallest number of colors that is needed to find an edge-colored geometric drawing (i.e., one with edges drawn as straight-line segments) of $G$ with no monochromatic crossings.
For the book thickness $\bt(G)$, we additionally require that only geometric drawings with vertices in convex position are considered.


An immediate consequence from the definitions of thickness, geometric thickness and book thickness is that for every graph $G$ we have $\theta(G) \le \bar \theta(G) \le \bt(G)$.
Eppstein shows that the three thickness parameters can be arbitrarily \enquote{separated}.
Specifically, for any number $k$ there exists a graph with geometric thickness 2 and book thickness at least $k$~\cite{EppGeomBookThickness} as well as a graph with thickness 3 and geometric thickness at least $k$~\cite{EppGeomThickness}.
The latter result is particularly notable since any graph of thickness $k$ admits a $k$-edge-colored drawing of $G$ with no monochromatic crossings if edges are not required to be straight lines.
This follows from a result by Pach and Wenger~\cite{pw01}, stating that any planar graph can be drawn without crossings on arbitrary vertex positions with polylines.

Related to the geometric thickness is the \lightdfn{geometric arboricity} $\bar\arb(G)$ of a graph $G$, introduced by Dujmović and Wood~\cite{DujmovicWoodGeometricArboricity}.
It denotes the smallest number of colors that are needed to find an edge-colored geometric drawing of $G$ without monochromatic crossings where every color class is acyclic.
As every such plane forest is a plane graph, we have $\bar\theta(G) \le \bar\arb(G)$.
Moreover, every plane graph can be decomposed into three forests~\cite{SchnyderRealizer}, and therefore $3\bar\theta(G) \ge \bar\arb(G)$.

%



Bounds on the geometric thickness are known for several graph classes.
Due to Dillencourt et al.~\cite{GeomThicknessCompleteGraphs} we have $\frac{n}{5.646} + 0.342 \le \bar\theta(K_n) \le \frac{n}{4}$ for the complete graph $K_n$.
Graphs with bounded degree can have arbitrarily high geometric thickness.
In particular, as shown by Bar\'art et al.~\cite{BoundedDegreeGeomThickness}, there are $d$-regular graphs with $n$ vertices and geometric thickness at least $c\sqrt{d}n^{1/2 - 4/d - \varepsilon}$ for every $\varepsilon>0$ and some constant $c$.
However, due to Duncan et al.~\cite{GeomThicknessLowDegree}, if the maximum degree of a graph is 4, its geometric thickness is at most 2.
For graphs with treewidth $t$, Dujmovi\'c and Wood~\cite{DujmovicWoodGeometricArboricity} showed that the maximum geometric thickness is $\lceil t/2 \rceil$.
Hutchinson et al.~\cite{hutch} showed that graphs with $n$ vertices and geometric thickness 2 can have at most $6n-18$ edges.
As shown by Durocher et al.~\cite{hardness}, there are $n$-vertex graphs for any $n\ge 9$ with geometric thickness 2 and $2n-19$ edges.
In the same paper, it is proven that it is \np-hard to determine if the geometric thickness of a given graph is at most~2.
Computing thickness~\cite{mansfield} and book thickness~\cite{bookhard} are also known to be \np-hard problems.
For bounds on the thickness for several graph classes, we refer to the survey of Mutzel et al.~\cite{ThicknessSurvey}.
A good overview on bounds for book thickness can be found on the webpage of Pupyrev~\cite{pup}.


A graph $G$ is \lightdfn{$d$-degenerate} if every subgraph contains a vertex of degree at most $d$.
So we can repeatedly find a vertex of degree at most~$d$ and remove it, until no vertices remain.
The reversal of this vertex order (known as a \lightdfn{degeneracy order}) yields a construction sequence for $G$ that adds vertex by vertex and each new vertex is connected to at most $d$ previously added vertices (called its \lightdfn{predecessors}).
Adding a vertex with exactly two predecessors is also known as a Henneberg~1 step~\cite{HennebergGrafischeStatikKoerper}.
In particular, any $2$-degenerate graph is a subgraph of a Laman graph, however not every Laman graph is 2-degenerate.
Laman graphs are the generically minimal rigid graphs and they are exactly those graphs constructable from a single edge by some sequence of Henneberg~1 and Henneberg~2 steps (the latter step consists of subdividing an arbitrary existing edge and adding a new edge  between the subdivision vertex and an arbitrary, yet non-adjacent vertex).
All $d$-degenerate graphs are $(d,\ell)$-sparse, for any $\binom{d+1}{2} \ge \ell \ge 0$, that is, every subgraph on $n$ vertices has at most $dn-\ell$ edges.


\subparagraph*{Our Results.}
In this paper, we study the geometric thickness of 2-degenerate graphs.
Due to the Nash-Williams theorem~\cite{NW61,NW64}, every 2-degenerate graph can be decomposed into 2 forests and hence has arboricity at most~2 and therefore thickness at most~2.
On the other hand, as observed by Eppstein~\cite{EppGeomBookThickness}, 2-degenerate graphs can have unbounded book thickness.
Eppstein's examples of graphs with thickness 3 and arbitrarily high geometric thickness are 3-degenerate graphs~\cite{EppGeomThickness}.
Eppstein asks whether the geometric thickness of 2-degenerate graphs is bounded by a constant from above and whether there are 2-degenerate graphs with geometric thickness greater than $2$.
The currently best upper bound of $O(\log n )$ follows from a result by Duncan for graphs with arboricity~2~\cite{DuncanThickness}.
We improve this bound and answer both of Eppstein's questions with the following two theorems.

\begin{theorem}\label{thm:upperBound}
	For each $2$-degenerate graph $G$ we have $\bar\theta(G) \leq \bar\arb(G) \leq 4$.
\end{theorem}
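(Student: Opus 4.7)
The plan is to build, inductively along a degeneracy order $v_1, \ldots, v_n$ of $G$, a straight-line drawing together with a $4$-coloring of the edges whose color classes are plane forests. When the vertex $v_i$ is processed, it has at most two predecessors in $\{v_1,\ldots,v_{i-1}\}$, so at most two new edges are added; the only nontrivial task of each induction step is to place $v_i$ in the plane and to assign one of the four colors to each new edge while preserving a suitable invariant.

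I would place the vertices so that $v_i$ lies strictly above every earlier one, which forces each edge to drop from its later to its earlier endpoint. The four colors would be split into two \emph{horizontal} and two \emph{vertical} ones. For every already placed vertex $u$ and every color, I would maintain a pair of rays, called a \emph{guide} and a \emph{helper}, that carve out the region where a future same-color edge may attach to $u$ without crossing an existing edge of that color. The placement of $v_i$ is then constrained to lie in the intersection of the admissible regions of its two predecessors, and the two colors assigned to the new edges must be compatible with the current states of both endpoints.

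The heart of the argument is to show that such a placement and such a color choice always exist; I expect this to be the main obstacle. To handle it, I would classify each predecessor of $v_i$ by the \emph{type} of its most recently processed incident edge, using four cases, left/right and interior/exterior, reflecting the natural symmetries of the down-edges. For each combination of types I would exhibit a concrete subregion in which $v_i$ may be placed and an explicit assignment of its two new edges to colors so that, after the update of the guides and helpers of $v_i$ and of both predecessors, every vertex remains in a configuration that still admits future insertions. Acyclicity of each color class is automatic because $v_i$ is a fresh vertex and so no new edge can close a cycle; planarity within each color class will follow from the guide/helper invariants established along the induction.
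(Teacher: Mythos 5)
Your plan differs from the paper's in two structural ways: the paper processes an entire \emph{level} (all vertices at a given height in the degeneracy order) at once, placing each new level alternately strictly below or strictly to the right of everything drawn so far and applying a reflection at the line $x=-y$ between levels to swap the roles of the horizontal and vertical color pairs; you process one vertex at a time and always place it strictly above everything already drawn, maintaining per-vertex per-color ``guide'' and ``helper'' rays. The level-by-level scheme is not cosmetic: it is what makes the split into horizontal colors $\{\Ch,\Chs\}$ and vertical colors $\{\Cv,\Cvs\}$ geometrically meaningful (edges colored with the horizontal pair run roughly rightward, edges colored with the vertical pair run roughly downward, and the reflection lets the same two color-directions be reused for the next level). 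In your scheme \emph{every} edge runs downward from $v_i$ to its two predecessors, so the horizontal/vertical dichotomy you invoke has no obvious geometric content; it is not clear what would prevent the edges of two supposedly non-interacting color classes from crossing. Also, the paper's invariant is deliberately uniform and coarse: constraints \labelcref{constr:hopen,constr:vopen} say that \emph{every} vertex is $\Ch$-open to the right and $\Cv$-open to the bottom, and the placement of a new level is achieved once and for all by choosing a single small slope $m$ and small $\varepsilon$ satisfying \crefrange{perturb:xR}{perturb:slantedCrosings}, not by a per-vertex case analysis.

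The genuine gap is the one you yourself flag: you never show that a feasible placement and color choice always exist, and that the guide/helper invariants can be restored after an insertion. You describe a classification of predecessors into four types and promise a ``concrete subregion'' and an ``explicit assignment'' for each combination, but none of this is supplied, and it is exactly here that all the difficulty lives. (Your four types left/right/interior/exterior echo the $Q$-edge types the paper uses in the lower-bound argument of \cref{sec:lowerBound}, not in the construction for \cref{thm:upperBound}; they don't obviously transfer.) Separately, the claim that ``acyclicity of each color class is automatic because $v_i$ is a fresh vertex'' is only true if the two new edges at $v_i$ receive \emph{different} colors: if both were colored $c$ and the two predecessors were already joined by a $c$-colored path, a $c$-cycle would close. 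The paper makes this explicit as \labelcref{constr:lastColors}, and then the no-monochromatic-cycle conclusion follows; your write-up needs the same stipulation rather than treating it as automatic.
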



\begin{theorem}\label{thm:lowerBound}
	There is a $2$-degenerate graph $G$ with $\bar\arb(G) \geq \bar\theta(G) \geq 3$.
\end{theorem}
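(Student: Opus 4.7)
The plan is to prove Theorem~\ref{thm:lowerBound} by explicitly constructing a 2-degenerate graph $G$ and ruling out geometric thickness~$2$ via a Ramsey-type argument applied to any straight-line 2-colouring. Since $\bar\arb(G) \geq \bar\theta(G)$ holds by definition, it suffices to produce a 2-degenerate $G$ with $\bar\theta(G) \geq 3$. The flexibility I will exploit throughout is that a Henneberg~1 step (attaching a new vertex of degree~$2$ to two existing vertices) preserves 2-degeneracy, so arbitrarily many such vertices may be attached to any pair of existing vertices without leaving the class.

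Concretely, I would fix a small \enquote{core} 2-degenerate graph $H$ on a vertex set $S$ of size $k$ --- a natural first attempt is $K_4$ or a small $2$-tree --- and, for each pair $\{a,b\}$ of interest in $S$, attach a bundle $W_{ab}$ of $N$ fresh vertices, each joined to both $a$ and $b$. The resulting graph $G$ is 2-degenerate, since the bundles can be peeled off first and then $H$ in its own degeneracy order. Assume, for contradiction, that $G$ admits a straight-line drawing whose edge set is partitioned into two plane subgraphs, coloured red and blue. Fixing the positions of the core vertices $S$, they induce an arrangement of straight lines partitioning the plane into finitely many cells. Each $w \in W_{ab}$ lies in one of these cells and carries one of four colour patterns on the two edges $aw$ and $bw$. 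Choosing $N$ sufficiently large and applying pigeonhole, followed by an Erd\H{o}s--Szekeres extraction along some fixed direction, I may assume that within every bundle $W_{ab}$ a large sub-bundle shares a common cell, a common colour pattern, and a monotone order.

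The crux, and the main obstacle, is to convert this local uniformity into a forced monochromatic crossing. A bundle with a \enquote{same-coloured} pattern (both $aw$ and $bw$ red, or both blue) yields nested monochromatic quadrilaterals through $\{a,b\}$ and hence a forbidden strip for other edges of that colour; a bundle with a \enquote{mixed} pattern produces long monochromatic fans at $a$ and at $b$, again forbidding crossings within the corresponding colour classes. Combining the forbidden regions contributed by all bundles with the edges of $H$, the constraints placed on only two colour classes should, for a sufficiently rich core $H$ and sufficiently many bundles, become unsatisfiable. I expect most of the work to live in this final step: selecting $H$ and the family of attached pairs so that a careful case analysis over the finitely many geometric types of colour--cell assignments really does expose two same-coloured straight segments whose crossing is unavoidable.
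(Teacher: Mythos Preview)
Your proposal is not yet a proof: you explicitly defer the crucial step (``I expect most of the work to live in this final step''), and the specific construction you outline will not complete. A minor issue is that $K_4$ is not $2$-degenerate. The substantive gap is that one layer of Henneberg-1 bundles over a small fixed core does not force geometric thickness~$3$. Indeed, the paper exhibits a tidy drawing of $H_n$ (the full $1$-subdivision of $K_{n,n}$) in which all $A$-edges are mutually non-crossing and all $B$-edges are mutually non-crossing; so arbitrarily large graphs of the shape ``independent set plus one layer of degree-$2$ bundles'' already have geometric thickness~$2$. Adding a bounded number of core edges on a bounded vertex set cannot change this. Your forbidden-region heuristic is too coarse: with mixed colour patterns on the bundles each colour class becomes a plane forest of stars and no contradiction emerges from any finite case analysis on a fixed-size core.

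The paper's construction is substantially deeper: it iterates the bundle step \emph{three} times, producing four layers $\Lambda_0,\Lambda_1,\Lambda_2,\Lambda_3$ with $|\Lambda_0|$ enormous (doubly exponential in an auxiliary parameter). The proof first applies Erd\H{o}s--Szekeres and bipartite Ramsey to the $\Lambda_0$--$\Lambda_1$ edges to extract a large \emph{tidy $k$-grid}: $k$ segments of one colour each crossing $k$ segments of the other, combinatorially equivalent to an axis-aligned grid, with a $\Lambda_1$-vertex trapped in every cell. This rigid grid is the geometric obstruction that replaces your loose forbidden regions. A tailored Ramsey lemma on admissible colourings of a blown-up grid-complement graph $\Gamma(k,t)$, combined with a non-stretchability argument for certain line arrangements, then shows that many $\Lambda_1$--$\Lambda_2$ paths must stay inside the grid, trapping $\Lambda_2$-vertices in its cells; a second application does the same for $\Lambda_2$--$\Lambda_3$ paths. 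The contradiction finally comes from five $\Lambda_2$-vertices sharing one cell, each forced to send edges toward all four quadrants of the grid, which is shown to be unrealisable with two plane colour classes. The multi-layer iteration and the grid extraction are both essential; neither appears in your plan.
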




\section{Proof of \texorpdfstring{\cref{thm:upperBound}}{Theorem~1}: The upper bound}

In this section, we prove \cref{thm:upperBound}.
To this end, we describe, for any $2$-degenerate graph, a construction for a straight-line drawing such that the edges can be colored using four colors, avoiding monochromatic crossings and monochromatic cycles.
This shows that $2$-degenerate graphs have geometric arboricity, and hence geometric thickness, at most four.

Before we give a high-level description of the construction we introduce some definitions.
For a graph $G$ we denote its edge set with $E(G)$ and its vertex set with $V(G)$.
Consider a $2$-degenerate graph $G$ with a given, fixed degeneracy order.
We define the \dfn{height} of a vertex
 $\height{v}$ as the length $t$ of a longest path $u_0\cdots u_t$ with $u_t=v$ such that for each $1 \le i \le t$ vertex $u_{i-1}$ is a predecessor of $u_{i}$.
The set of vertices of the same height is called a \dfn{level} of $G$.
By definition, each vertex has at most two neighbors of smaller height.

Our construction process embeds $G$ level by level with increasing height.
The levels are placed alternately either strictly below or strictly to the right of the already embedded part of the graph.
If a level is placed below, then we use specific colors $\Cv$ and $\Cvs$ (short for \enquote{vertical} and \enquote{vertical slanted}, respectively) for all edges between this level and levels of smaller height.
Similarly, we use specific colors $\Ch$ and $\Chs$ (short for \enquote{horizontal} and \enquote{horizontal slanted}, respectively) if a level is placed to the right.
See \cref{fig:feasibleDrawing} (right).

To make our construction work, we need several additional constraints to be satisfied in each step which we will describe next.
For a point $p$ in the plane, we use the notation $\xcoord{p}$ and $\ycoord{p}$ to refer to the x- and y-coordinates of $p$, respectively.
Consider a drawing $D$ of a $2$-degenerate graph $G$ of height $k$ together with a coloring of the edges with colors $\{\Ch,\Chs,\Cv,\Cvs\}$.
For the remaining proof, we assume that each vertex of $G$ has either $0$ or exactly $2$ predecessors.
If not, we add a dummy vertex without predecessors to the graph and make it the second predecessor of all those vertices which originally only had $1$ predecessor.
We say that $D$ is \dfn{feasible} if it satisfies the following constraints:
\begin{enumerate}[(C1)]
	\item
		\label[constraint]{constr:lastColors}
		For each vertex in $G$ the edges to its predecessors are colored differently.
		If $k>0$, then each vertex of height $k$ in $G$ is incident to one edge of color $\Ch$ and one edge of color $\Chs$.
	\item
		\label[constraint]{constr:separate}
		There exists some $x_D\in\mathbb{R}$ such that for each vertex $v\in V(G)$ we have $\xcoord{v}>x_D$ if and only if $\height{v}=k$.
	\item
		\label[constraint]{constr:planarity}
		There is no monochromatic crossing.
	\item
		\label[constraint]{constr:NAA}
		No two vertices of $G$ lie on the same horizontal or vertical line.
	\item
		\label[constraint]{constr:hopen}
		Each $v\in V(G)$ is \dfn{$\Ch$-open to the right}, that is, the horizontal ray emanating at $v$ directed to the right avoids all $\Ch$-edges.
	\item
		\label[constraint]{constr:vopen}
		Each $v\in V(G)$ is \dfn{$\Cv$-open to the bottom}, that is, the vertical ray emanating at $v$ directed downwards avoids all $\Cv$-edges.
\end{enumerate}
These \lcnamecrefs{constr:planarity} are schematized in \cref{fig:feasibleDrawing}.

\begin{figure}
	\centering
	\includegraphics{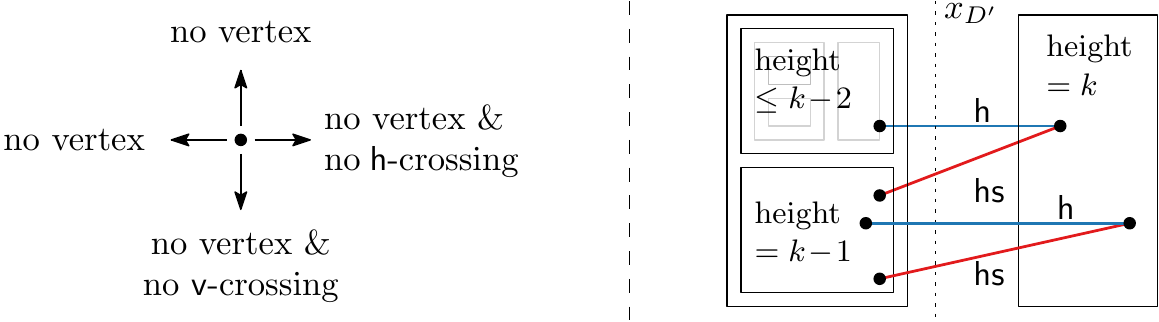}
	\caption{Left: For each vertex $v$ in a feasible drawing, there are no other vertices on the vertical and the horizontal line through $v$.
	Moreover, $v$ is $\Ch$-open to the right and $\Cv$-open to the bottom.
	Right: All vertices in the highest level (of height $k$) are placed to the right of all vertices of smaller height.
	Moreover, each vertex in that level is incident to one edge of color $\Ch$ and one edge of color $\Chs$.}
	\label{fig:feasibleDrawing}
\end{figure}

We now show how to construct a feasible drawing for $G$.
We prove this using induction on the height of the graph.
The base case $k=0$ is trivial, as there are no edges in the graph.
Assume that $k\geq 1$ and the theorem is true for all $2$-degenerate graphs with height $k-1$.
Let $H$ denote the subgraph of $G$ induced by vertices with height less than $k$.
By induction, there is a feasible drawing $D$ of $H$.

As a first step, we reflect the drawing $D$ at the straight line $x=-y$. 
Thus, a point $(x, y)$ before transformation becomes $(-y, -x)$. 
Additionally, we swap the colors $\Chs$ and $\Cvs$ as well as the colors $\Ch$ and $\Cv$.
Let $D'$ denote the resulting drawing.
From now on, all appearing coordinates of vertices refer to coordinates in $D'$.
By construction, $D'$ satisfies \labelcref{constr:planarity,constr:NAA,constr:hopen,constr:vopen}.
Applying \labelcref{constr:lastColors} to $D$ shows that in $D'$ each vertex of height $k-1$ is incident to one edge of color $\Cv$ and one edge of color $\Cvs$.
Applying \labelcref{constr:separate} to $D$ shows that there exists $y_{D'}\in\mathbb{R}$ such that for each vertex $v\in V(H)$ we have $\ycoord{x}<y_{D'}$ in $D'$ if and only if $\height{v}=k-1$.

As the second (and last) step, we place the points of height $k$ of $G$ such that the resulting drawing is feasible.
Let $L_k$ denote the set of these vertices and let $x_{D'}$ denote the largest x-coordinate among all vertices in $D'$.
Choose a sufficiently small slope $m$, with $m>0$, and a sufficiently small $\varepsilon$, with $\varepsilon>0$, such that the following holds.
\begin{enumerate}[(i)]
	\item
		\label[perturbation]{perturb:xR}
		For any distinct $u$, $v \in V(H)$ with $\ycoord{u} < \ycoord{v}$, the horizontal line through $v$ and the straight line through $u$ with slope $m$ intersect at a point $p$ with $\xcoord{p} > x_{D'}$.
	\item
		\label[perturbation]{perturb:NAA}
		For any distinct $u$, $v\in V(H)$ we have that $\varepsilon< \lvert \ycoord{u}-\ycoord{v}\rvert$.
	\item
		\label[perturbation]{perturb:NAAx}
		For any distinct $u$, $u'$, $v$, $v'\in V(H)$ let $p$ be the intersection point of the straight line through $u$ with slope $m$ and the horizontal line through $v$ and let $p'$ be the intersection point of the straight line through $u'$ with slope $m$ and the horizontal line through $v'$.
		If $\xcoord{p}\neq\xcoord{p'}$, then $\varepsilon < \lvert\xcoord{p}-\xcoord{p'}\rvert$.
	\item
		\label[perturbation]{perturb:slantedCrosings}
		For any distinct $u$, $v\in V(H)$ we have that $\varepsilon$ is smaller than the distance between the two straight lines of slope $m$ through $u$ and $v$, respectively.
\end{enumerate}

\begin{figure}
	\centering
	\includegraphics{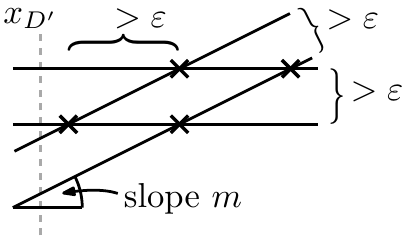}
	\caption{Horizontal lines intersecting straight lines of slope $m$.
	\cref{perturb:NAA,perturb:NAAx,perturb:slantedCrosings} are illustrated.}
	\label{fig:perturbation}
\end{figure}

The constraints are summarized in \cref{fig:perturbation}.
Such a choice of $m$ and $\varepsilon$ is possible, by choosing $m$ according to \cref{perturb:xR} first and then $\varepsilon$ according to the \cref{perturb:NAA,perturb:NAAx,perturb:slantedCrosings}.

For each vertex $w \in L_k$ let $u$ and $v$ be the two predecessors of $w$ in $H$ with $\ycoord{u} < \ycoord{v}$ and let $p^w$ denote the intersection point of the straight line of slope $m$ passing through $u$ (called a slanted line) and the horizontal line passing through $v$.
We will place $w$ close to $p^w$ and connect $w$ to $v$ using an edge of color $\Ch$ and we connect $w$ to $u$ using an edge of color $\Chs$.
To determine the exact location of the vertices, we consider the horizontal lines through vertices $v\in V(H)$ from bottom to top (with increasing y-coordinate) and for each such line consider the intersections with slanted lines through vertices $u\in V(H)$ with $\ycoord{u}<\ycoord{v}$ from left to right (with increasing x-coordinate).
Let $p_1,\ldots,p_t$ denote the intersection points in the order just described.
For each intersection point $p_i$ let $\ell_i$ denote the straight line through $p_i$ with slope $-1/m$ (which is negative as $m>0$), that is, $\ell_i$ is perpendicular to straight lines of slope $m$.
Every vertex $w\in L_k$ with $p^w=p_i$ will be placed on $\ell_i$ at a certain distance from $p^w$ (specified later).
Note that there might be multiple points with the same predecessors and hence multiple vertices $w\in L_k$ with $p^w=p_i$.
For each $p_i$ we order all such vertices arbitrarily.
This gives an ordering of all vertices in $L_k$ based on the ordering $p_1,\ldots,p_t$.
If $w$ is the $k^\text{th}$ vertex in this order, $w$ is placed on $\ell_i$ to the bottom-right of $p_i$ at distance $\varepsilon/2^k$ from $p_i$; see Figure~\ref{fig:pointPlacement}.
In this fashion, all vertices in $L_k$ are placed with decreasing distance to their respective intersection point; see Figure~\ref{fig:pointPlacement2}.

\begin{figure}
	\centering
	\includegraphics{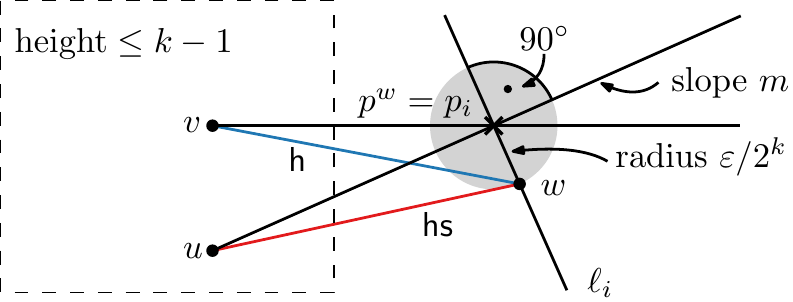}
	\caption{The placement of the $k^\text{th}$ point $w$ in order of vertices in $L_k$.}
	\label{fig:pointPlacement}
\end{figure}

We call the resulting drawing $D_G$.
We claim that $D_G$ demonstrates that the geometric arboricity of $G$ is at most four.

\begin{description}
	\item[{\boldmath Vertices on distinct points, edges intersect in at most one point in $D_G$.}]
		For each $i\le t$ let $\delta_i$ denote the distance between $p_i$ and the first vertex $w$ placed close to $p_i$.
		Then $\delta_i \leq \delta_{i-1}/2$ for each $i$ with $1<i\leq t$.
		For each $i\le t$ let $B_i$ be the region formed by all points $q\in\mathbb{R}$ of distance at most $\delta_i$ to $p_i$ with $\xcoord{q}>\xcoord{p_i}$ and $\ycoord{q}<\ycoord{p_i}$ ($B_i$ is a quarter of a disk).
		Then all vertices $w\in L_k$ with $p^w=p_i$ are placed on distinct points along the intersection of the line $\ell_i$ with $B_i$; see \cref{fig:pointPlacement2}.


		\begin{figure}
			\centering
			\includegraphics{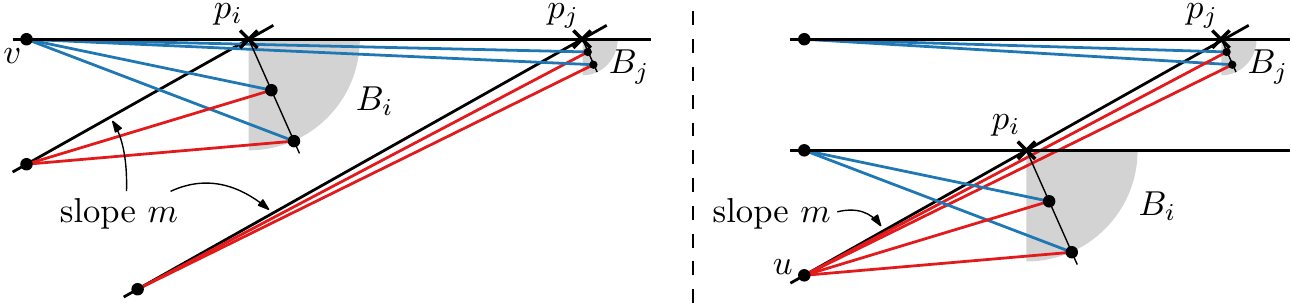}
			\caption{The placement of several points with a common \enquote{horizontal} predecessor $v$ (left) or a common \enquote{slanted} predecessor $u$ (right).
			Edges with color $\Ch$ are drawn blue, edges with color $\Chs$ are drawn red.}
			\label{fig:pointPlacement2}
		\end{figure}

		Due to \cref{perturb:NAA,perturb:slantedCrosings}, all the regions $B_i$ are disjoint.
		By construction, no two vertices are placed on the same point within a region $B_i$.
		This shows that no two vertices in $G$ are placed on the same point in $D_G$.
		Moreover, for the same reasons, for each vertex $v\in V(H)$ the edges between $v$ and vertices in $L_k$ do not contain vertices in their interior and intersect in $v$ only.
		This shows no edge in $G$ contains vertices in its interior and any two edges in $G$ intersect in at most one point.

	\item[\labelcref{constr:lastColors}]
		By construction, each vertex in $L_k$ is incident to an edge of color $\Ch$ and an edge of color $\Chs$.
		Hence, $D_G$ satisfies \labelcref{constr:lastColors}.

	\item[\labelcref{constr:separate}]
		By \cref{perturb:xR}, any horizontal line through some vertex of $H$ and a slanted straight line through a vertex of height $k-1$ in $H$ intersect in some point with x-coordinate larger than $x_{D'}$.
		Each vertex $w\in L_k$ is placed slightly to the right of such an intersection point.
		Hence, $D_G$ satisfies \labelcref{constr:separate}  with $x_{D_G}=x_{D'}$.

	\item[\labelcref{constr:planarity}]
		The edges in the drawing $D'$ of $H$ were not changed, so there are still no monochromatic crossings of those edges.
		Consider an edge $vw$ with $v\in V(H)$ and $w\in L_k$.

		First, assume that its color is $\Ch$.
		Then $\xcoord{w}>\xcoord{v}$ and $\ycoord{w}<\ycoord{v}$ by construction.
		Consider an edge $e$ of color $\Ch$ in $H$.
		We shall prove that $e$ does not cross $vw$.
		If both endpoints of $e$ lie above $v$, then $e$ does not cross $vw$.
		If $e$ crosses the horizontal line through $v$ in some point $p$, then $\xcoord{p}<\xcoord{v}$ since $v$ is $\Ch$-open to the right in $D'$.
		Moreover, one endpoint of  $e$ lies above $v$ while the other endpoint lies below $w$ due to \cref{perturb:NAA}.
		So $e$ does not cross $vw$.
		If both endpoints of $e$ lie below $v$, then their y-coordinates are smaller than $\ycoord{w}$ due to \cref{perturb:NAA}.
		Hence, $e$ does not cross $vw$ in either case.

		Now consider an edge $v'w'$ of color $\Ch$ with $v'\in V(H)$, $\ycoord{v'}<\ycoord{v}$ and $w'\in L_k$.
		As $\ycoord{w}>\ycoord{v'}$ by \cref{perturb:NAA} and $\ycoord{w'}<\ycoord{v'}$ by construction, these two edges do not cross.
		This shows that edges of color $\Ch$ do not cross in $D_G$.

		\begin{figure}
			\centering
			\includegraphics{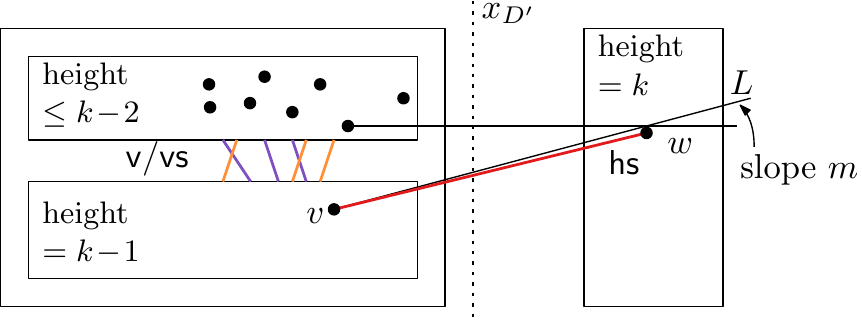}
			\caption{Checking~\cref{constr:planarity} for $\Chs$-colored edges.}
			\label{fig:c3}
		\end{figure}

		Now assume that the color of $vw$ is $\Chs$.
		By construction, $v$ is the predecessor of $w$ of the smallest y-coordinate.
		Since $w$ has at least one predecessor of height $k-1$ and, by induction, all vertices of this height are placed below the vertices of smaller height in $D'$, we have that $\height{v}=k-1$.
		Consider the slanted straight line $L$ (of slope $m$) through $v$.
		By \cref{perturb:xR}, $L$ does not intersect the convex hull of vertices of height less than $k-1$ in $D'$; see~\cref{fig:c3}.
		By induction, all vertices of height $k-1$ in $H$ are incident to edges of color $\Cv$ and $\Cvs$ only.
		Hence, $L$ does not intersect any edge of color $\Chs$ in $D'$.
		The edge $vw$ has a positive slope slightly smaller than $L$ and hence does not intersect any edge of color $\Chs$ in $D'$ either.
		It remains to show that $vw$ does not intersect edges $v'w'$ of color $\Chs$ with $v'\in V(H)$, $v'\neq v$, and $w'\in L_k$.
		Consider the slanted straight line $L'$ (of slope $m$) through $v'$.
		Without loss of generality, assume that $L$ is above $L'$ (the case $L=L'$ produces no crossing since then $v=v'$).
		The edge $v'w'$ has a positive slope slightly smaller than $L'$.
		By \cref{perturb:slantedCrosings}, the distance between $L$ and $w$ is smaller than the distance between $L$ and $L'$.
		Hence $vw$ does not cross $v'w'$.

		This shows that edges of color $\Chs$ do not cross in $D_G$ and hence $D_G$ satisfies \labelcref{constr:planarity}.

	\item[\labelcref{constr:NAA}]
		No two vertices from $H$ lie on a common vertical or horizontal line by induction.
		Consider $w\in L_k$ and the region $B_i$ containing $w$.
		Due to~\cref{perturb:NAA} no horizontal line through $B_i$ contains a vertex from $H$.
		Moreover, by~\labelcref{constr:separate} no vertical line through $B_i$ contains a vertex from $H$.
		Note that either two different regions $B_i/B_j$ are separated by a horizontal line
		or $\ycoord{p_i} = \ycoord{p_j}$. In both cases, vertices placed in $B_i/B_j$ cannot have
		the same y-coordinate. This is clear in the former case and
		in the latter it is true since we never
		select the same distance from $p_i/p_j$ when placing the vertices. For the x-coordinates
		we can argue similarly.
		Hence, $D_G$ satisfies \labelcref{constr:NAA}.

	\item[\labelcref{constr:hopen}]
		First, consider a vertex $v\in V(H)$ and the horizontal ray $L$ emanating at $v$ to the right.
		In the drawing $D'$, each vertex in $H$ is $\Ch$-open to the right, so $L$ does not intersect any $\Ch$-colored edge from $H$.
		It remains to consider $\Ch$-colored edges $v'w$ with $v'\in V(H)$ and $w\in L_k$.
		Then $\xcoord{w}>\xcoord{v'}$ and $\ycoord{v'}> \ycoord{w} > \ycoord{v'}-\varepsilon$ by construction.
		So if $\ycoord{v'}<\ycoord{v}$, $L$ does not intersect $v'w$.
		If $\ycoord{v'}>\ycoord{v}$, then observe that $\ycoord{w}>\ycoord{v'}-\varepsilon > \ycoord{v}$ by \cref{perturb:NAA}.
		Hence $L$ does not intersect $v'w$ in either case and $v$ is $\Ch$-open to the right in $G_D$.

		Now consider a vertex $w\in L_k$ and the horizontal ray $L$ emanating at $w$ to the right.
		By \labelcref{constr:separate}, $L$ does not intersect any edge from $H$.
		It remains to consider $\Ch$-colored edges $v'w'$ with $w'\in L_k$.
		Let $v$ be the neighbor of $w$ in $H$ with $vw$ colored $\Ch$.

		If $v'=v$, consider the region $B_i$ containing $w$.
		If $w'$ is in $B_i$, then $w'$ and $w$ lie on the diagonal $\ell_i$ in $B_i$.
		If $w'$ is in $B_j$ with $j<i$, then $w'$ is placed to the left of $w$, and if $w'$ is on $B_j$ with $j>i$, then $w'$ is placed above $w$.
		In either case, $L$ does not intersect $v'w'$.

		Now suppose that $v'\neq v$.
		Assume that $\ycoord{v'}<\ycoord{v}$ then by \cref{perturb:NAA} and by construction $\ycoord{w}>\ycoord{v'}>\ycoord{w'}$.
		If on the other hand $\ycoord{v'}>\ycoord{v}$ then $\ycoord{v'}>\ycoord{w'}>\ycoord{v} > \ycoord{w}$, again by \cref{perturb:NAA} and by construction.
		In both cases, it follows that $L$ does not intersect $v'w'$.

		This shows that each vertex of $G$ is $\Ch$-open to the right in $D_G$.

	\item[\labelcref{constr:vopen}]
		In the drawing $D'$, each vertex in $H$ is $\Cv$-open to the bottom.
		The vertices in $L_k$ are not incident to any edges of color $\Cv$.
		Hence, all vertices of $G$ are $\Cv$-open to the bottom in $D_G$.
		So \labelcref{constr:vopen} is satisfied.

	\item[No monochromatic cycles.]
		\labelcref{constr:lastColors,constr:separate,constr:planarity,constr:NAA,constr:hopen,constr:vopen} are satisfied, thus $D_G$ is feasible, and uses 4 colors.
		Consider any cycle in $G$ and a vertex $w$ of largest height in the cycle.
		Then its neighbors $u$ and $v$ in the cycle have to be its predecessors.
		Due to \labelcref{constr:lastColors}, $uw$ and $vw$ do not have the same color.
		Hence there are no monochromatic cycles.
\end{description}

\clearpage


\section{Proof of \texorpdfstring{\cref{thm:lowerBound}}{Theorem~2}: The lower bound}\label{sec:lowerBound}

In this section, we shall describe a $2$-degenerate graph with geometric thickness at least~$3$.
For a positive integer $n$ let $G(n)$ denote the graph constructed as follows.
Start with a vertex set $\Lambda_0$ of size $n$ and for each pair of vertices from $\Lambda_0$ add one new vertex adjacent to both vertices from the pair.
Let $\Lambda_1$ denote the set of vertices added in the last step.
For each pair of vertices from $\Lambda_1$ add $89$ new vertices, each adjacent to both vertices from the pair.
Let $\Lambda_2$ denote the set of vertices added in the last step.
For each pair of vertices from $\Lambda_2$ add one new vertex adjacent to both vertices from the pair.
Let $\Lambda_3$ denote the set of vertices added in the last step.
This concludes the construction.
Observe that for each $i=1,2,3$, each vertex in $\Lambda_i$ has exactly two neighbors in $\Lambda_{i-1}$.
Hence, $G(n)$ is $2$-degenerate.
We claim that for sufficiently large $n$ the graph $G(n)$ has geometric thickness at least~$3$.
To prove this result, we need several geometric and topological insights that we outline next. 

\begin{figure}
	\centering
	\includegraphics{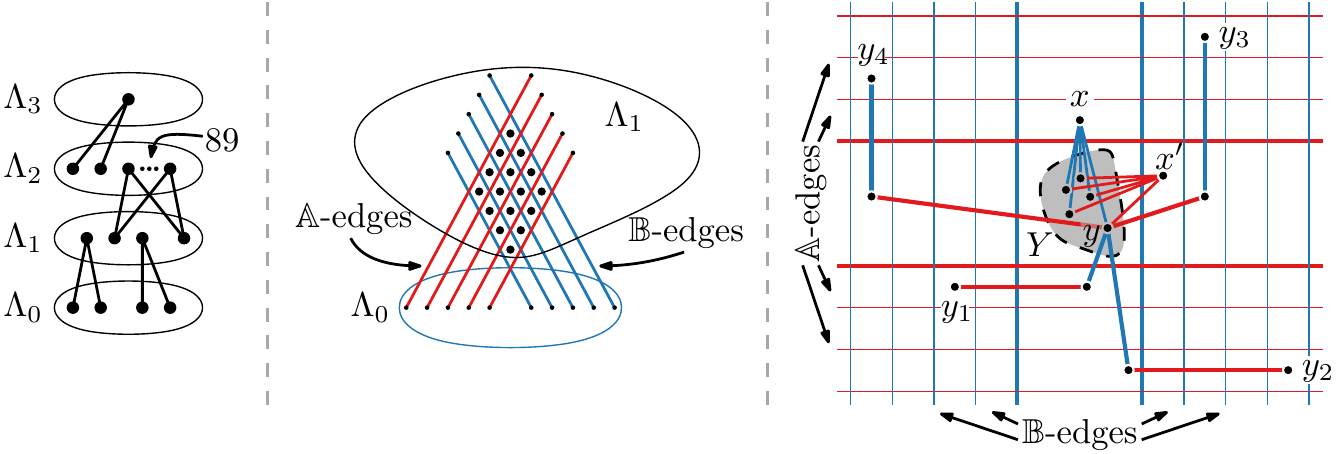}
	\caption{Left: Sketch of the graph $G(n)$. Middle: A tidy grid. Right: The situation leading to a contradiction in the proof of \cref{thm:lowerBound} with $x$, $x'\in\Lambda_1$, $Y\subseteq \Lambda_2$, and $y_1$, $y_2$, $y_3$, $y_4\in\Lambda_3$.}
	\label{fig:lowerBoundSummary}
\end{figure}

We consider a geometric drawing of $G(n)$, for large $n$, and assume that there is a partition of its edge set into two plane subgraphs $\mathbb{A}$ and $\mathbb{B}$.
In the first step, we find a large, and particularly nice grid structure  (called a tidy grid) formed by edges between $\Lambda_0$ and $\Lambda_1$ where many disjoint $\mathbb{A}$-edges cross many disjoint $\mathbb{B}$-edges.
We additionally ensure that there is a large subset $\Lambda'_1\subseteq \Lambda_1$ spread out over many cells of this grid.
Next, we consider the connections of vertices from $\Lambda'_1$ via the edges towards $\Lambda_2$.
We show that the drawing restrictions imposed by the surrounding grid edges force many of the edges between $\Lambda'_1$ and $\Lambda_2$ to stay within the grid.
This gives a large subset $\Lambda'_2\subseteq \Lambda_2$ spread out over many cells of the grid.
Similarly to the previous argument, we then find many of the edges between $\Lambda'_2$ and $\Lambda_3$ staying within the grid.
We eventually arrive at a situation depicted in \cref{fig:lowerBoundSummary} (right):
A cell with a set $Y$ of five vertices from $\Lambda_2$ with the same predecessors in $\Lambda_1$, such that for each $y\in Y$ there are four vertices $y_1,\ldots,y_4\in \Lambda_2$ (one from the bottom-left, one from the bottom-right, one from the top-right, and one from the top-left part of the grid) and for each $i$ the common neighbor of $y$ and $y_i$ from $\Lambda_3$ lies in the grid.
It turns out, that each $y\in Y$ either has an $\mathbb{A}$-edge to the left and an $\mathbb{A}$-edge to the right or it has  a  $\mathbb{B}$-edge to the top and a  $\mathbb{B}$-edge to the bottom (using directions from \cref{fig:lowerBoundSummary}).
As this is impossible to realize for all five vertices in $Y$ simultaneously, the geometric thickness of $G(n)$ is at least~$3$.
We give the full argumentation next.

\subsection[Finding k-grids]{Finding $k$-grids}

\begin{figure}
	\centering
	\includegraphics{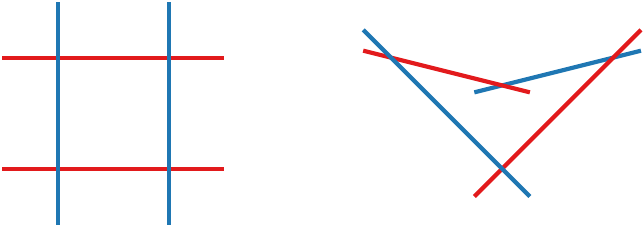}
	\caption{All arrangements of two disjoint red line segments and two disjoint blue segments, such that each red segment crosses each blue segment (up to combinatorial equivalence).}
	\label{fig:2grid}
\end{figure}

We start with observations about the arrangement of line segments.
We call two arrangements of straight lines or straight-line segments \lightdfn{combinatorially equivalent} if the embeddings given by the arrangement of their graphs (skeletons) are combinatorially equivalent.

\begin{lemma}\label{lem:2grid}
	Up to combinatorial equivalence, there are two arrangements of two disjoint red line segments and two disjoint blue segments, such that each red segment crosses each blue segment (see~\cref{fig:2grid}).
\end{lemma}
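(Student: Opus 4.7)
The plan is to encode each valid arrangement by the orders of its crossings along the four segments and to argue that this combinatorial data pins down the equivalence class.

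For $i,j \in \{1,2\}$, let $p_{ij}$ denote the unique intersection point of $r_i$ and $b_j$. Each of the four segments has exactly two crossings, which appear in one of two possible orders along it. After possibly renaming $b_1 \leftrightarrow b_2$, I would assume that along $r_1$ the order is $p_{11},p_{12}$. Exactly two cases then remain, distinguished by the order along $r_2$: Case~(1) the same order $p_{21},p_{22}$, and Case~(2) the reversed order $p_{22},p_{21}$.

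Next I would argue that each case determines the arrangement up to combinatorial equivalence. The skeleton of the arrangement is a plane graph whose vertices are the four degree-$4$ crossings and the eight degree-$1$ segment endpoints, connected by twelve arcs. At each crossing $p_{ij}$ the rotation alternates red and blue (the crossing is transverse), so the local embedding is forced. What remains to pin down is the global cyclic order of the eight tails along the outer face. For this, I would consider the central closed curve $C$ obtained by concatenating the four subsegments $p_{11}p_{12}\subset r_1$, $p_{12}p_{22}\subset b_2$, $p_{22}p_{21}\subset r_2$, $p_{21}p_{11}\subset b_1$. In Case~(1), $C$ is a simple $4$-cycle bounding a quadrilateral inner face, and the eight tails must appear in a unique cyclic order around it; in Case~(2), the configuration is twisted but the disjointness of $\{r_1,r_2\}$ and of $\{b_1,b_2\}$ together with the forced rotations again leaves exactly one combinatorial possibility.

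Realizability is the easy part: Case~(1) is realized by the standard $2\times 2$ grid of two horizontal red segments and two vertical blue segments, while Case~(2) is realized, for example, by $r_1 = [(-3,0),(-1,0)]$, $r_2 = [(0,1),(0,3)]$, $b_1 = [(-2.5,0),(0,2.5)]$, and $b_2 = [(-1.5,0),(0,1.5)]$, where the two blue segments are parallel and hence disjoint. The main obstacle I anticipate is the uniqueness step in the previous paragraph: after fixing the order of crossings along each segment, one must check carefully that the disjointness of the two red (and of the two blue) segments, combined with the forced rotations at the four crossings, really leaves no additional freedom in how the eight tails are distributed along the outer face.
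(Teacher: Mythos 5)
Your route is genuinely different from the paper's. The paper extends each segment to a full line, uses the fact that all simple arrangements of four lines are combinatorially equivalent, and classifies the $\binom{4}{2}/2 = 3$ ways to 2-color the lines; two of those colorings can be realized by disjoint segments and one cannot. You instead work directly at the segment level, trying to classify by the order of crossings along $r_1$ and $r_2$. That could in principle work, but as written there are concrete problems.

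First, your case split is not well-defined. The ``order along $r_2$'' depends on an orientation of $r_2$, which you never fix: reversing $r_2$ swaps Case~(1) and Case~(2), so the two ``cases'' are not distinct as stated. To make the split meaningful you would need to fix orientations consistently (for example, orient $r_1$ and $r_2$ so that both cross $b_1$ from the same side, and then compare the orders of $p_{*1},p_{*2}$).

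Second, and more seriously, your explicit Case~(2) example is actually combinatorially equivalent to the grid of Case~(1). Your configuration ($r_1$ horizontal, $r_2$ vertical, $b_1,b_2$ parallel diagonals) has a convex central quadrilateral $p_{11}p_{12}p_{22}p_{21}$, and if you compute the cyclic orders at all four crossings you get exactly the same rotation system as the standard $2\times 2$ grid; indeed the two drawings are related by a continuous deformation (rotate $r_2$ about its midpoint until it is horizontal). With the consistent orientation convention above, your example yields the order $p_{21},p_{22}$ along $r_2$, i.e.\ it is Case~(1). A genuine second type requires that, after extending to lines, one red segment passes over the red--red crossing (and one blue over the blue--blue crossing) while its partner does not; in your example neither red and neither blue does, which is the hallmark of the grid type. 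A correct Case~(2) instance is, for example, $r_1=[(0.1,0.4),(-1.1,7.6)]$ on $y=1-6x$, $r_2=[(-0.1,2.8),(1.5,6)]$ on $y=3+2x$, $b_1=[(0,0),(0,4)]$ on $x=0$, $b_2=[(-2,8),(2,4)]$ on $y=6-x$; here $r_1$ contains the red--red intersection of the two red lines but $r_2$ does not, and the central quadrilateral is non-convex.

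Third, the step ``at each crossing the rotation alternates red and blue, so the local embedding is forced'' is not correct: the alternation leaves exactly two possibilities for the cyclic order at a crossing (whether the two \emph{core} arcs or a core and a tail arc are adjacent), and it is precisely the pattern of these choices over the four crossings that separates the two arrangements. This is exactly the freedom your uniqueness argument must rule out, and you rightly flag this step as the obstacle; as written, it is a genuine gap, not merely a detail to be checked.
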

\begin{proof}
	Consider an arrangement of two disjoint red and two disjoint blue segments where each red segment crosses each blue segment.
	By applying a small perturbation, if necessary, we ensure that the line arrangement obtained by extending each segment to a line is simple.
	\Cref{fig:4StraightLines} (left) shows one such arrangement.
	All simple arrangements of four lines in the plane are combinatorially equivalent.
	There are $\binom{4}{2}=6$ ways to color two lines blue and two lines red.
	\Cref{fig:4StraightLines} shows these possibilities up to permutations of the colors.
	Each coloring induces exactly four red-blue crossings and hence superimposes an arrangement of segments with four red-blue crossings.
	The arrangement satisfies the required properties (for type (I) and (II) in \cref{fig:4StraightLines}) or it violates the disjointness property due to a monochromatic crossing (type (III)).
	This shows that any arrangement of four segments satisfying the required properties is equivalent to an arrangement given in \cref{fig:2grid}.
\end{proof}

\begin{figure}
	\centering
	\includegraphics{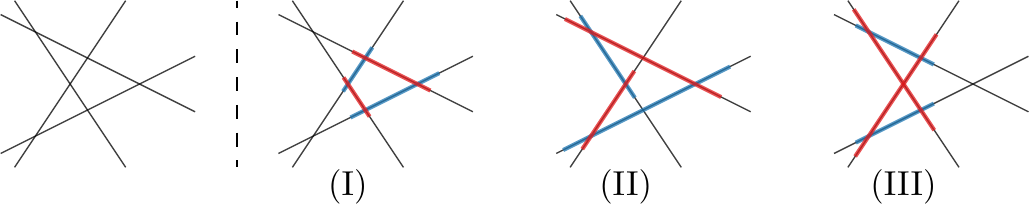}
	\caption{Left: An arrangement of four lines.
	Right: Three possibilities to color the lines red and blue.
	Coloring (I) and (II) each correspond to a combinatorially unique arrangement of colored segments satisfying the requirements of \cref{lem:2grid}.
	Coloring (III) does not correspond to any such arrangement.}
	\label{fig:4StraightLines}
\end{figure}

Let $G_k$ denote the grid formed by $k$ horizontal straight-line segments crossing $k$ vertical straight-line segments.
The grid $G_k$ has four \lightdfn{sides}: the sets of left and right endpoints of the horizontal segments and the sets of lower and upper endpoints of the vertical segments form the four sides of $G_k$, respectively.
The first and the last horizontal segment and the first and the last vertical segment form the \lightdfn{boundary} of $G_k$ while all other segments are called the \lightdfn{inner edges} of $G_k$.
We call an arrangement of straight-line segments combinatorially equivalent to $G_k$ a \lightdfn{$k$-grid}.
We point out that a $k$-grid sometimes refers to a set of disjoint red segments and a set of disjoint blue segments where every pair of red/blue segment intersects; e.g.,~\cite{AFPS14}.
Note that our definition is more restrictive.
Among others, no two segments share an endpoint in our notion of a $k$-grid.
The following lemma shows how both concepts are related.

\begin{restatable}{lemma}{RestateKgrid}\label{lem:kgrid}
	Each arrangement of $k2^{k-1}$ disjoint red straight-line segments and $k$ disjoint blue straight-line segments, where each red segment crosses each blue segment, contains a $k$-grid.
\end{restatable}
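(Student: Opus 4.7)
I would proceed by induction on $k$. The base case $k=1$ is immediate: the single red crosses the single blue, which is trivially a 1-grid.

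For the inductive step, given $k$ disjoint blues $b_1,\dots,b_k$ and $N = k\cdot 2^{k-1}$ disjoint reds each crossing each blue, the plan is a pigeonhole argument based on a $2^{k-1}$-valued combinatorial signature. Fix an orientation of $b_k$ and orient each red $r$ so that, at its crossing with $b_k$, the directed crossing matches the fixed side convention. For each red $r$, define a signature $s_r \in \{0,1\}^{k-1}$ by setting the $i$th bit to $0$ if $r$ crosses $b_i$ before its crossing with $b_k$, and to $1$ otherwise. Since there are only $2^{k-1}$ possible signatures, some signature class contains at least $N/2^{k-1} = k$ reds; call them $\rho_1,\dots,\rho_k$.

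I would then argue that $\rho_1,\dots,\rho_k$ together with $b_1,\dots,b_k$ are combinatorially equivalent to $G_k$. Sharing the common signature forces, for every $i<k$ and every pair $\rho_s,\rho_t$, that the relative order of $b_i$ and $b_k$ along $\rho_s$ matches that along $\rho_t$, so by \cref{lem:2grid} the quadruple $\{\rho_s,\rho_t,b_i,b_k\}$ is of the grid type (Type I in the notation of the lemma). To lift this to pairs $(b_i,b_j)$ with $i,j<k$, I would invoke the pairwise disjointness of the reds: if some $\rho_s,\rho_t$ were of the non-grid type with $(b_i,b_j)$, then their sub-segments between their $b_i$- and $b_j$-crossings would have \emph{swapped} their relative sides of $b_i$ and $b_j$; chasing the resulting topology, together with the fact that both sub-segments still share the \emph{same} side relative to $b_k$, one can force either a red–red crossing or a blue–blue crossing, contradicting the disjointness hypotheses of the lemma. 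Therefore every pair of blues induces a Type I configuration on $\{\rho_s,\rho_t\}$, which is exactly the combinatorial content of being a $k$-grid.

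I expect the main obstacle to be the final step: rigorously certifying that disjointness together with Type I on every pair involving $b_k$ forces Type I on every other pair of blues. This is effectively a 2-red-3-blue strengthening of \cref{lem:2grid}, and is likely to require a careful case analysis over combinatorial placements, or an auxiliary induction nested inside the main one. A secondary, more technical point is fixing a canonical orientation of each red so the signature $s_r$ is well-defined; this I would handle by first orienting $b_k$ and then orienting each red to cross $b_k$ in the induced direction.
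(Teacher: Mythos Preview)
Your overall architecture matches the paper's: pigeonhole on a $2^{k-1}$-valued signature attached to each red segment, then argue via \cref{lem:2grid} that the $k$ selected reds together with the $k$ blues form a $k$-grid. (The induction framing is superfluous; you never invoke the inductive hypothesis.)

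The substantive difference, and the source of your gap, is the \emph{choice of signature}. You record, for each $i<k$, whether $r$ meets $b_i$ \emph{before or after} $b_k$ along $r$. The paper instead orients the reds so they all cross $\bar b=b_k$ from left to right, and records whether each $b_i$ crosses $r$ \emph{from the left or from the right}. These are not equivalent: two blues can cross $r$ from the same side yet appear in either order along $r$, and vice versa. The paper's signature is exactly tailored to the characterisation of type~(II) in \cref{lem:2grid}: in a type~(II) arrangement the two blues cross one red from the same side and the other red from opposite sides. Hence once all $k$ blues cross every selected red from the same side, \emph{every} $2$-red--$2$-blue sub-arrangement is forced to be type~(I), with no special role for $b_k$.

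With your order-based signature you only control pairs $(b_i,b_k)$, and you correctly flag the lift to arbitrary $(b_i,b_j)$ as the main obstacle. Your sketch for that lift (``both sub-segments still share the same side relative to $b_k$'') appeals to side information that your signature does not actually record, so as written it does not go through. The clean fix is not a $2$-red--$3$-blue case analysis but simply to switch the signature to crossing direction; then the gap disappears and the argument collapses to the paper's two-line finish.
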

\begin{proof}\label{prf:kgrid}
	Let $\mathcal{R}$ and $\mathcal{B}$ denote the sets of red and blue segments, respectively.
	Give every blue segment an arbitrary orientation, then pick one blue segment and name it $\bar{b}$.
	Now orient each red segment such that all red segments cross $\bar{b}$ from left to right.
	Let $b_1,\ldots,b_{k-1}$ denote the $k-1$ blue segments in $\mathcal{B}\setminus\{\bar{b}\}$.
	For each red segment $r$ consider the crossing vector $(\mathrm{bx}_1,\ldots,\mathrm{bx}_{k-1})$ where $\mathrm{bx}_i=0$ if $b_i$ crosses $r$ from left to right and $\mathrm{bx}_i=1$ otherwise.
	Since $\lvert\mathcal{A}\rvert = k2^{k-1}$ there is, by the pigeonhole principle, a set $S$ of $k$ red segments with the same crossing vector.
	We may assume, by reorienting blue segments if necessary, that for each $r\in S$ all blue segments cross $r$ in the same direction as $\bar{b}$ (say, from left to right).

	We claim that $S$ and $\mathcal{B}$ form a $k$-grid.
	To see this, we shall prove that any two red segments from $S$ cross the blue segments from $\mathcal{B}$ in the same order and, similarly, any two blue segments from $\mathcal{B}$ cross the red segments from $S$ in the same order (with respect to the orientations of segments fixed above).
	Consider two segments $r$, $r'\in S$ and two segments $b$, $b'\in\mathcal{B}$.
	We claim that they form an arrangement of type (I).
	Indeed, it is straightforward to check that for each orientation of segments in an arrangement of type (II), the two blue segments cross one red segment from the same side and the other segment from opposite sides.
	As $b$ and $b'$ cross both $r$ and $r'$ from the same side, $r$, $r'$, $b$, and $b'$ form an arrangement of type (I).
	Since all blue segments cross each red segment from left to right, the orientation of segments in an arrangement of type (I) corresponds to one of the orientations given in \cref{fig:orientedCrossings}.
	Thus, the order of crossings is the same along each red and blue segment.
	This shows that $S$ and $\mathcal{B}$ form a $k$-grid.
\end{proof}

\begin{figure}
	\centering
	\includegraphics{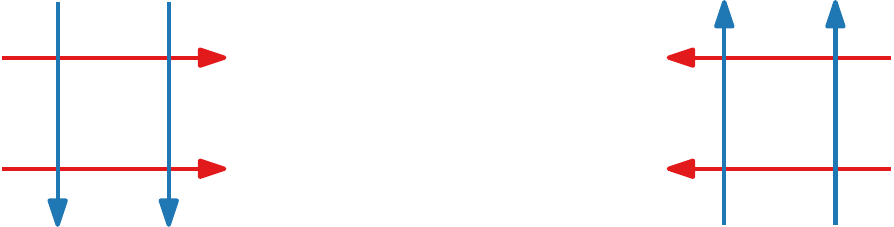}
	\caption{The only possible orientations of segments in an arrangement of type (I) if the blue segments cross each red segment from left to right.}
	\label{fig:orientedCrossings}
\end{figure}

\subsection[Finding tidy k-grids]{Finding tidy $k$-grids}

Observe that a full $1$-subdivision of a graph is $2$-degenerate.
We are particularly interested in subdivisions of large complete bipartite graphs.
Fox and Pach~\cite{FP10} show that there is a constant $c'_1$, with $c'_1\geq 1$, such that for each $k\geq 2$ and for any topological graph on $n$ vertices and more than $n (\log n)^{c'_1\log k}$ edges, there is a set of $k$ independent, pairwise crossing edges.
The following lemma is a direct consequence of this.

\begin{lemma}[{\cite[Lemma 5.3]{FP10}}]\label{lem:pwCrossingEdges}
	There is a constant $c_1$ such that for each $k\geq 3$ and $n\geq 2^{c_1(\log k)^2}$ each topological drawing of $K_{n,n}$ contains a set of $k$ independent, pairwise crossing edges.
\end{lemma}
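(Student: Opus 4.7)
The plan is to derive this lemma as an immediate numerical consequence of the Fox--Pach bound quoted just above, applied to the complete bipartite drawing itself. A topological drawing of $K_{n,n}$ is, in particular, a topological graph on $N := 2n$ vertices with exactly $n^2$ edges, so it suffices to choose $c_1$ so that the hypothesis $n \geq 2^{c_1 (\log k)^2}$ forces
\[ n^2 \;>\; N (\log N)^{c'_1 \log k} \;=\; 2n \bigl(\log(2n)\bigr)^{c'_1 \log k}; \]
then the Fox--Pach theorem produces the desired $k$ independent, pairwise crossing edges.

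Dividing by $2n$ and taking logarithms (base $2$), the inequality to verify becomes
\[ \log n \;>\; 1 + c'_1 (\log k) \log\log(2n). \]
Substituting the lower bound $\log n \geq c_1 (\log k)^2$ and the trivial estimate $\log(2n) \leq 2\log n$ (valid for $n \geq 2$), the right-hand side is at most
\[ 1 + c'_1 (\log k)\bigl(1 + \log c_1 + 2\log\log k\bigr), \]
which is $O(c'_1 \log k \cdot \log\log k)$ once $k\geq 3$. Since $(\log k)^2$ grows strictly faster than $\log k \cdot \log\log k$, the inequality holds as soon as $c_1$ exceeds a threshold depending only on $c'_1$.

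The only technical point is this routine asymptotic comparison of $(\log k)^2$ against $\log k \cdot \log\log k$, so I do not anticipate a genuine obstacle; any finite range of small $k$ is absorbed by inflating $c_1$ further, as $2^{c_1 (\log k)^2}$ can be made arbitrarily large for each fixed $k \geq 3$. With such a $c_1$ fixed once and for all, applying the Fox--Pach theorem directly to the topological graph $K_{n,n}$ yields the required $k$ pairwise crossing, pairwise disjoint edges, completing the proof.
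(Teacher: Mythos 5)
Your overall plan is exactly the paper's: treat the drawing of $K_{n,n}$ as a topological graph on $2n$ vertices with $n^2$ edges and show the hypothesis on $n$ forces $n^2 > 2n(\log(2n))^{c'_1\log k}$, so that the quoted Fox--Pach theorem applies. However, your execution has a genuine logical gap. The inequality you need is
$\log n > 1 + c'_1(\log k)\log\log(2n)$,
and to bound its right-hand side you write $\log\log(2n) \leq 1 + \log\log n \leq 1 + \log c_1 + 2\log\log k$, which implicitly uses $\log n \leq c_1(\log k)^2$. But the hypothesis gives you the \emph{opposite} inequality $\log n \geq c_1(\log k)^2$; a lower bound on $n$ gives a lower bound on $\log\log n$, not an upper bound, so this substitution is in the wrong direction. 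In effect you are only verifying the target inequality at the boundary $n = 2^{c_1(\log k)^2}$ without justifying why that is the worst case.

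The paper closes precisely this gap by first observing that $(\log x - 1)/\log\log(2x)$ is increasing for $x>2$, which shows that once the inequality $\frac{\log n - 1}{\log\log(2n)} \geq c'_1\log k$ holds at the boundary it holds for all larger $n$. With that monotonicity in hand, substituting $n = 2^{c_1(\log k)^2}$ is legitimate, and the rest of your asymptotic comparison (that $(\log k)^2$ dominates $\log k\cdot\log\log k$, so inflating $c_1$ suffices) goes through. So your proof is not wrong in spirit, but as written it is missing the monotonicity step that makes the boundary substitution valid; you should either prove that $\log n - c'_1(\log k)\log\log(2n)$ is increasing on the relevant range, or argue as the paper does via the increasing ratio.
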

\begin{proof}
	Let $p=c'_1\log k$ where $c'_1$ is the constant mentioned above.
	We shall show that there is a constant $c_1$ such that with $n\geq 2^{c_1(\log k)^2}$, there are more than $2n(\log (2n))^p$ edges in $K_{n,n}$ (which has $2n$ vertices).
	First observe that the term $(\log(x)-1)/\log\log(2x)$ is increasing and positive for $x>2$.
	Hence, for sufficiently large $c_1$ and $n\geq 2^{c_1(\log k)^2}$ we have
	\[\frac{\log(n)-1}{\log\log (2n)} \geq \frac{c_1(\log k)^2-1}{\log(c_1(\log k)^2+1)} \geq \frac{ \frac{c_1}{2}(\log k)^2}{\log(c_1(\log k)^2+1)} = p\ \frac{\frac{c_1}{2c'_1}\log k}{\log(c_1(\log k)^2+1)} \geq p.\]

	Here the first inequality is based on the lower bound on $n$ while the latter two inequalities hold for sufficiently large $c_1$ (independent of $k$).
	From this we see that $\log(n) \geq \log (2(\log (2n))^p)$.
	Taking powers on both sides shows that the number of edges in $K_{n,n}$ is $n^2 \geq 2n (\log (2n))^p$ as desired.
	Hence, any topological drawing of $K_{n,n}$ contains a set of $k$ independent, pairwise crossing edges.
\end{proof}

\begin{figure}
	\centering
	\includegraphics{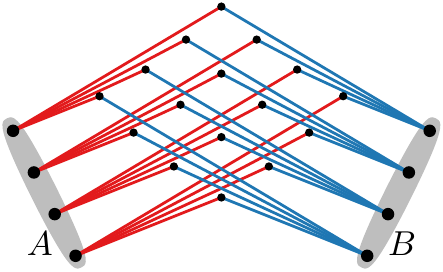}
	\caption{A tidy drawing of $H_4$, the full $1$-subdivision of $K_{4,4}$. In particular, edges incident to $A$ do not cross each other, edges incident to $B$ do not cross each other, and, hence, there are no three pairwise crossing edges.}
	\label{fig:1bendKnn}
\end{figure}

After subdividing the edges there can be no three pairwise crossing edges (see \cref{fig:1bendKnn}) since otherwise we get a monochromatic crossing.
Still, there are two large sets of edges such that each edge from one set crosses each edge from the other set, hence forming a grid as in the definition by Ackermann et al.~\cite{AFPS14}.
To prove this, we use the bipartite Ramsey theorem introduced by Beineke and Schwenk~\cite{BS76}.
Lacking a proper reference for the multicolor version we are using here, we include a standard proof of this result based on the K\H{o}v{\'a}ri--S{\'o}s--Tur{\'a}n theorem \cite{KST54}, which states that every bipartite graph with bipartition classes of size $n$ each and no copy of $K_{k,k}$ contains less than $(k-1)^{1/k}n^{2-1/k}+kn$ edges; see also Irving~\cite{Irving78}.

\begin{lemma}\label{lem:bipartiteRamsey}
	Let $k$ and $r$ be positive integers.
	For each $n\geq (3r)^k$, and each $r$-coloring of $E(K_{n,n})$ there is a copy of $K_{k,k}$ with all edges of the same color.
\end{lemma}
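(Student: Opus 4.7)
\medskip
\textbf{Proof plan.}
The plan is a direct pigeonhole plus Kővári–Sós–Turán argument. First I would pigeonhole: since $E(K_{n,n})$ has $n^{2}$ edges distributed over $r$ color classes, some color class $c$ spans a bipartite graph $G_c \subseteq K_{n,n}$ (on the same bipartition $(A,B)$ with $|A|=|B|=n$) containing at least $n^{2}/r$ edges. The goal then becomes: show that such a graph must contain a copy of $K_{k,k}$.

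Next I would invoke the Kővári–Sós–Turán theorem as stated in the excerpt: a bipartite graph on parts of size $n$ with no $K_{k,k}$ has fewer than $(k-1)^{1/k} n^{2-1/k} + k n$ edges. Assuming for contradiction that $G_c$ has no monochromatic $K_{k,k}$, this forces
\[
\frac{n^{2}}{r} \;<\; (k-1)^{1/k}\, n^{2-1/k} \;+\; k n.
\]
Dividing by $n$, the required contradiction reduces to the numerical inequality
\[
\frac{n^{1/k}}{r} \;>\; (k-1)^{1/k} + \frac{k}{n^{1-1/k}}.
\]

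To close the argument I would use the hypothesis $n \geq (3r)^{k}$, which gives $n^{1/k} \geq 3r$, hence the left-hand side is at least $3$. For the right-hand side I would use the elementary bound $(k-1)^{1/k} \leq k^{1/k} \leq e^{1/e} < 3/2$, so the first summand is safely below $3/2$. For the second summand, $n^{1-1/k} \geq (3r)^{k-1} \geq 3^{k-1}$, so $k/n^{1-1/k} \leq k/3^{k-1}$, which is at most $1$ for $k \geq 2$ and handled separately (and trivially) in the degenerate cases $k = 1$ or $r = 1$. Summing, the right-hand side is strictly less than $3$, contradicting the lower bound of $3$ on the left-hand side.

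The main obstacle is essentially bookkeeping: verifying the numerical inequality cleanly for all $k \geq 1$ and $r \geq 1$, including the small cases where the additive $kn$ term in the KST bound is not negligible compared to the main $(k-1)^{1/k} n^{2-1/k}$ term. These small cases ($k \leq 2$) may be easier to treat by an \emph{ad hoc} direct argument (for $k=1$ the statement is trivial since any color class is nonempty; for $k=2$ one can count cherries), rather than forcing them through the general estimate. Once the inequality is established, the contradiction with the pigeonhole lower bound $n^{2}/r$ on $|E(G_c)|$ delivers the claimed monochromatic $K_{k,k}$.
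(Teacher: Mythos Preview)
Your proposal is correct and follows essentially the same approach as the paper: pigeonhole to find a dense color class, then the K\H{o}v\'ari--S\'os--Tur\'an bound, with a short numerical check that $n\ge(3r)^k$ suffices. One harmless slip: where you say ``dividing by $n$'', the displayed inequality you obtain actually comes from dividing by $n^{2-1/k}$; the rest of the estimate goes through as written.
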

\begin{proof}
	For each $r$-coloring of $E(K_{n,n})$ there is a color class whose number of edges is at least
	\[\frac{n^2}{r} = \frac{n^{1/k}}{r} n^{2-1/k} \geq 3 n^{2-1/k} = 2n^{2-1/k}+n^{2-1/k} \geq (k-1)^{1/k}n^{2-1/k}+kn.\]
	The first inequality holds due to the lower bound on $n$ and the second inequality holds as $(x-1)^{1/x}<2$ and $k \leq n^{1-1/k}$ for $k\leq \log n$.
	Hence, this color class contains a (monochromatic) copy of $K_{k,k}$ due to the K\H{o}v{\'a}ri--S{\'o}s--Tur{\'a}n theorem.
\end{proof}

In the following, we need a grid-structure with some additional properties summarized in the following definitions.
For any point set $Q$ in the plane, we call a straight-line segment in the plane a \lightdfn{$Q$-edge} if it has an endpoint in $Q$.
We call two point sets $A$ and $B$ \lightdfn{separated} if $A\cup B$ is in convex position and the convex hull of $A$ does not intersect the convex hull of $B$ (that is, along the boundary of the convex hull of $A\cup B$ the sets do not interleave).

Consider a complete bipartite graph $K_{n,n}$ with bipartition classes $A$ and $B$.
Let $H_n$ denote the graph obtained from $K_{n,n}$ by subdividing each edge exactly once.
Let $C$ denote the set of subdivision vertices of $H_n$.
Observe that each edge of $H_n$ has one endpoint in $C$ and the other endpoint in $A\cup B$, and hence is either an $A$-edge or a $B$-edge.
We call a geometric drawing of $H_n$ \lightdfn{tidy}, if $A$ and $B$ are separated, there is no crossing between any two $A$-edges, and there is no crossing between any two $B$-edges.
\Cref{fig:1bendKnn} shows a tidy drawing of $H_4$.
Note that we make no (convexity) assumptions on the positions of subdivision vertices.
Since $A$ and $B$ are separated, a tidy drawing induces an ordering of $A$ and $B$ by traversing these points along the convex hull of $A\cup B$ in the counterclockwise direction starting with the vertices in $A$.
An edge of $H_n$ is called an \lightdfn{inner edge} if it is not incident to the first or last vertex of $A$ and not incident to the first or last vertex of $B$ in the order given above.
Similarly, we call an edge of the underlying copy of $K_{n,n}$ an \lightdfn{inner edge} if it corresponds to two inner edges of $H_n$.

Consider a $k$-grid $T$ in $H_n$ with one side in $A$ and one side in $B$ (and the respective opposite sides in $C$).
We call the sides of $T$ that are contained in $A$ or $B$ the $A$-side and $B$-side, respectively.
Let $a_1,\ldots,a_k$ denote the vertices of the $A$-side of $T$ in the order given by $A$ and let $b_1,\ldots,b_k$ denote the vertices of the $B$-side of $T$ in the order given by $B$.
For each $i$ let $x^A_i$ denote the crossing point between the $A$-edge of $T$ with endpoint $a_i$ and the $B$-edge in $T$ farthest away from $a_i$.
For $i$, $j\le k$, with $i<j$, the \lightdfn{$A_{i,j}$-corridor} of $T$ is the polygon enclosed by $x^A_i$, $a_i$, $a_{i+1},\ldots, a_j$, $x^A_j$.
Crossing points $x^B_1,\ldots,x^B_k$ and \lightdfn{$B_{i,j}$-corridors} are defined similarly.
\Cref{fig:untidyGrid} (right) shows examples of such corridors.
A \lightdfn{tidy $k$-grid} is a topological subgraph $T$ of a tidy drawing of $H_n$ such that
\begin{itemize}
	\item $T$ is a $k$-grid with one side in $A$ and one side in $B$ (and the opposite sides in~$C$),
	\item for each $i\le k$, the segment $a_ix^A_i$ is contained in the $A_{1,k}$-corridor of $T$,
	\item for each $i\le k$, the segment $b_ix^B_i$ is contained in the $B_{1,k}$-corridor of $T$.
\end{itemize}
\Cref{fig:untidyGrid} shows a tidy 3-grid and a 4-grid that is not tidy.

\begin{figure}
	\centering
	\includegraphics{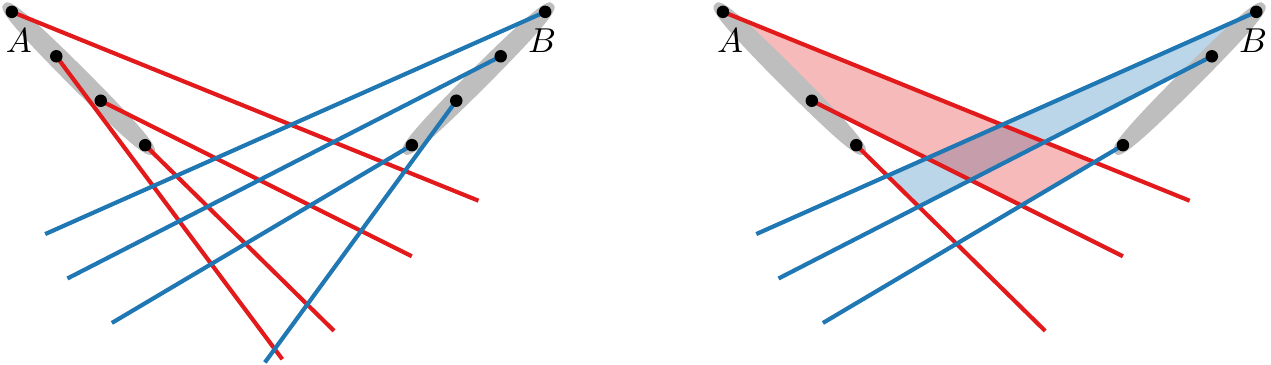}
	\caption{Left: A $4$-grid with sides in $A$ and $B$ that is not tidy: there is a (red) $A$-edge not contained in the $A_{1,k}$-corridor as well as a (blue) $B$-edge not contained in the $B_{1,k}$-corridor.
	Right: A tidy (sub)grid.
	The $A_{1,2}$-corridor and the $B_{2,3}$-corridor are highlighted.}
	\label{fig:untidyGrid}
\end{figure}

Our arguments require a tidy grid such that every cell contains a (subdivision) vertex from $C$.
Such a grid is called \lightdfn{dotted}.
To find a dotted tidy $k$-grid we will show that there is a tidy $k$-grid in any drawing $H_n$ such that between its $A$-side and its $B$-side we have a copy of $H_k$ with all edges staying within the grid.
To achieve this, we shall find a tidy $k$-grid and a copy of $H_k$ pointing to the same \enquote{sides} of $A$ and $B$.

We first need a precise definition of sides.
Consider a point set $Q$ in convex position in the plane (which will be either $A$ or $B$ later on) consisting of points $q_1,\ldots,q_k$, $k\geq 3$, labeled in counterclockwise order around the convex hull of $Q$.
We distinguish four types of $Q$-edges.
Any segment with an endpoint $q_i\in Q$ and the other endpoint not in the convex hull of $Q$ is of type
\begin{description}
	\item[\typel{$Q$}] if it intersects some segment $q_jq_{j+1}$ with $1\leq j < i-1$,
	\item[\typer{$Q$}] if it intersects some segment $q_jq_{j+1}$ with $i< j < k$,
	\item[\typei{$Q$}] if it intersects the segment $q_1q_k$,
	\item[\typee{$Q$}] if it does not intersect the interior of $Q$.
\end{description}

Observe that any $Q$-edge that leaves its endpoint in $Q$ towards the interior of $Q$, and does not have its other endpoint in the convex hull of $Q$, is of one of the first three types.
See \cref{fig:Qtypes} for an illustration.

\begin{figure}
	\centering
	\includegraphics{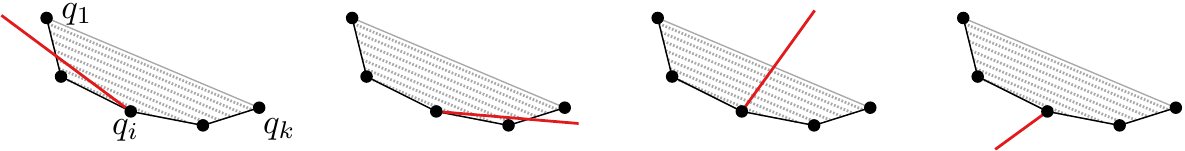}
	\caption{From left to right: types \typel{$Q$}, \typer{$Q$}, \typei{$Q$}, \typee{$Q$}.}
	\label{fig:Qtypes}
\end{figure}

Consider a tidy drawing $D$ of $H_n$ and let $A$ and $B$ denote the bipartition classes of the underlying topological drawing $K$ of $K_{n,n}$.
There are $16$ different types of edges in $K$: each edge in $K$ is of one of the four $A$-types and of one of the four $B$-types.
In a tidy grid, all inner $A$-edges are of the same $A$-type and all inner $B$-edges are of the same $B$-type.
However, only the \typei{$A$/$B$} and \typee{$A$/$B$} types are possible.

We can now show how to find a large copy of $H_k$ with all edges of a suitable kind of type.

\begin{lemma}\label{lem:sameTypeBiclique}
	Let $n$, $k$ be integers with $n\geq 48^{k^2}$.
	For each tidy drawing $D$ of $H_n$, there is a tidy drawing of $H_k$ in $D$ such that for the $A$-side $A'$ of $H_k$ all inner $A'$-edges are of type \typei{$A'$} or all $A'$-edges are of type \typee{$A'$} and for the $B$-side $B'$ of $H_k$ all inner $B'$-edges are of type \typei{$B'$} or all $B'$-edges are of type \typee{$B'$}.
\end{lemma}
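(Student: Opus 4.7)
The plan is to apply the bipartite Ramsey theorem (Lemma~\ref{lem:bipartiteRamsey}) with a $16$-color palette that simultaneously records both the $A$-type and the $B$-type of each edge. Concretely, for each edge $ab$ of the underlying $K_{n,n}$ of $H_n$, let $c$ denote its subdivision vertex in $D$, and color $ab$ by the pair $(X_a, Y_b)\in\{L,R,I,E\}^2$, where $X_a$ is the $A$-type of the straight segment $ac$ and $Y_b$ is the $B$-type of the segment $bc$. Since $n\ge 48^{k^2}=(3\cdot 16)^{k^2}$, Lemma~\ref{lem:bipartiteRamsey} applied with $r=16$ and side-size $k^2$ yields a monochromatic $K_{k^2,k^2}$ on $\tilde A\times\tilde B$, all of whose edges share a common combined type $(X,Y)$.

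From this biclique I plan to extract the desired $H_k$ by independently selecting $A'\subseteq\tilde A$ and $B'\subseteq\tilde B$ of size $k$ each; by symmetry it suffices to explain the choice of $A'$. If $X=E$, then $\operatorname{conv}(A')\subseteq\operatorname{conv}(A)$ forces every $A'$-edge to remain of type \typee{A'}, so any $A'\subseteq\tilde A$ of size $k$ works. If $X=I$, then for any choice of $A'$ and any inner vertex $a'_p$, the closing chord $a'_1a'_k$ is a chord of $\operatorname{conv}(A)$ nested strictly inside the closing side $a_1a_n$ of $A$, with $a'_p$ on the short-arc side of both chords; since by hypothesis the segment starting at $a'_p$ already crosses $a_1a_n$, convexity of $\operatorname{conv}(A)$ forces it to also cross $a'_1a'_k$, so every inner $A'$-edge has type \typei{A'}.

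The main obstacle, and the reason for $k^2$ rather than $k$ in the hypothesis, is the case $X\in\{L,R\}$. Here each segment $\tilde a_qc$ exits $\operatorname{conv}(A)$ through some side $a_ja_{j+1}$ of $A$'s convex polygon strictly to one side of $\tilde a_q$ in the $A$-order, and the resulting $A'$-type depends delicately on whether this exit side lies in the outer arc of $\operatorname{conv}(A')$, in a non-adjacent inner ear (which yields the forbidden type \typel{A'} or \typer{A'}), or in the ear adjacent to $a'_p$ itself. I plan to exploit the extra factor $k$ in $|\tilde A|=k^2$ by partitioning $\tilde A$ into $k$ consecutive blocks of size $k$, picking $A'$ with exactly one representative per block, and using pigeonholing on which block contains the exit side of each $A'$-edge to select the representatives so that every inner $a'_p$ is simultaneously shielded, with all its exit sides pushed into the outer arc of $\operatorname{conv}(A')$. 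A nested-chord argument as in the $X=I$ case then upgrades those segments to type \typei{A'}. The technical heart of the proof, and the step I expect to require the most care, is arranging this simultaneous shielding uniformly across all $k-2$ inner vertices, which is precisely what the blown-up set $\tilde A$ of size $k^2$ is designed to make possible.
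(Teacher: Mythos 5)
The first half of your proposal is correct and matches the paper: encode each edge of the underlying $K_{n,n}$ by the pair of its $A$-type and $B$-type, apply the bipartite Ramsey lemma with $r=16$ to obtain a monochromatic $K_{k^2,k^2}$, and observe that if the common type is already in $\{\text{\typei{},\typee{}}\}$ on each side, the inner edges retain that type after restriction. Your arguments for the $X=E$ and $X=I$ cases are sound (the convexity argument that a segment crossing $a_1a_n$ must also cross $a'_1a'_k$ is correct).

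The gap is precisely where you flagged it: the case $X\in\{L,R\}$. Your plan to partition $\tilde A$ into $k$ consecutive blocks, pick one representative per block, and pigeonhole on which block each exit side lands in does not work as stated, because the exit sides of the various $A'$-edges from the chosen representatives are not independent of the choices; whether an exit lies ``before $a'_1$'' depends on which vertex you pick as $a'_1$, so the pigeonhole constraints are circular. More importantly, your argument never uses the hypothesis that the drawing is \emph{tidy}, which is essential here. Without tidiness one can prescribe exit positions for the $\tilde A$-edges so that no transversal of your blocks has all inner exits in the outer arc, yet every exit is strictly left of its own vertex (the pigeonhole gives no traction). The paper instead uses tidiness directly: for each $a_i$ in the monochromatic biclique, consider the ``chord'' of $\operatorname{conv}(A)$ cut off by the $A$-edge from $a_i$ with the nearest exit; tidiness forces all these chords to be pairwise non-crossing, hence pairwise nested or pairwise separated, and an Erd\H{o}s--Szekeres/Dilworth argument on $(k-1)^2+1$ non-crossing chords yields $k$ that are pairwise nested (giving \typei{$A'$} on all inner edges after taking the corresponding $a_i$'s) or pairwise separated (giving \typee{$A'$} after shifting to the $a_{i-1}$'s lying in the pocket cut off by each chord). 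This non-crossing chord structure is the key idea missing from your proposal, and the factor $k^2$ (really $(k-1)^2+1$) is spent on this Dilworth argument, not on a block pigeonhole.
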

\begin{proof}
	Let $k_1=(k-1)^2+1$ and $n\geq 48^{k^2} \geq 48^{k_1}$.
	Consider a tidy drawing $D$ of $H_n$ and let $A$ and $B$ denote the bipartition classes of the underlying topological drawing $K$ of $K_{n,n}$ and let $C$ denote the set of subdivision vertices.
	By \cref{lem:bipartiteRamsey} with $r=16$, there is a copy $K'$ of $K_{k_1,k_1}$ in $K$ with all edges of the same type, that is, all edges of the same $A$-type and all edges of the same $B$-type.
	If this $A$-type is from $\{\text{\typei{$A$}},\text{\typee{$A$}}\}$ and this $B$-type is from $\{\text{\typei{$B$}},\text{\typee{$B$}}\}$, then we are done.
	Indeed, observe that these types are preserved for the inner edges of $K'$ when reducing the set $A\cup B$ to the vertex set of $K'$.

	Otherwise, assume that all edges of $K'$ are of type \typel{$A$}.
	We shall find a subset $A'\subseteq A$ of $k$ vertices such that the inner $A'$-edges in $H_n$ are either all of type \typei{$A'$} or all of type \typee{$A'$}.
	Consider the polygonal chain formed by $A$ in order $a_1,\ldots,a_n$.
	For each vertex $a_i\in A\cap V(K')$, each edge in $K'$ incident to $a_i$ intersects the chain in some segment $a_ja_{j+1}$ with $1\leq j<i-1$ due to its type.
	We call the part of this edge between $a_i$ and the intersection with the chain an \lightdfn{$a_i$-chord} of $A$.
	Let $e_i$ denote the $a_i$-chord whose intersection with the polygonal chain is closest to $a_i$ along the chain.
	Since $D$ is tidy, all the chords with different endpoints $a_i$ do not intersect.
	That is, two chords with different endpoints $a_i$ are either nested or separated.
	See \cref{fig:chords}.

	Since $k_1\geq (k-1)^2+1$ the set $\{e_i\colon a_i\in V(K)\cap A\}$ contains a subset $\Theta$ with $\lvert\Theta\rvert=k$ such that the chords in $\Theta$ are all pairwise nested or all pairwise separated.
	In case the chords are pairwise nested, we choose $A'=\{a_i\colon a_i\in V(K)\cap A, e_i\in\Theta\}$.
	Let $a'$ and $a''$ denote the vertices in $A'$ with the smallest and largest index $i$, respectively.
	Then all inner $A'$-edges in $H_n$ are of type \typei{$A'$}, as they cross the convex hull of $A'$ in the segment $a'a''$.
	See \cref{fig:chords} (left).
	In case the chords are pairwise separated, we choose $A'=\{a_{i-1}\colon a_i\in V(K)\cap A, e_i\in\Theta\}$.
	Observe that $a_{i-1}\in A'$ lies between the two endpoints of the chord $e_i$ and any $A'$-edge in $H_n$ with an endpoint $a_{i-1}\in A'$ does not intersect the chord $e_i$.
	Hence, all $A'$-edges in $H_n$ are of type \typee{$A'$}, as they do not intersect the interior of the convex hull of $A'$.
	See \cref{fig:chords} (right).

	Altogether, we see that we find a set $A'$ of size $k$ such that all $A'$-edges in $H_n$ are of the same type from $\{\text{\typei{$A'$}},\text{\typee{$A'$}}\}$, as desired.

	The other case, when all $A$-edges are of type \typer{$A$}, as well as the $B$-types of edges in $K'$, can be treated with symmetric arguments.
\end{proof}

\begin{figure}
	\centering
	\includegraphics{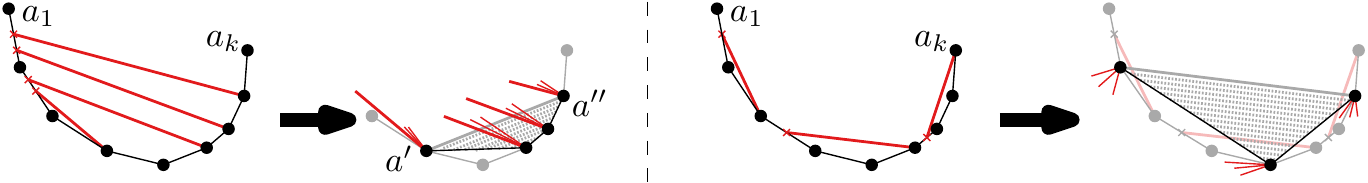}
	\caption{Non-crossing chords of a polygonal chain are either nested (left) or separated (right).}
	\label{fig:chords}
\end{figure}

Finally, we are ready to prove the existence of a dotted tidy grid in any tidy drawing of $H_n$.

\begin{restatable}{lemma}{RestateTidyGrid}\label{lem:tidyGrid}
	There is a constant $c_2$ such that for any integers $n$ and $k$, with $n\geq 2^{c_2 k^4 2^{8k}}$ and $k\geq 3$, each tidy drawing of $H_n$ contains a dotted tidy $k$-grid.
\end{restatable}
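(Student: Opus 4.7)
The plan is to obtain a dotted tidy $k$-grid from the given tidy drawing of $H_n$ through a chain of Ramsey-type reductions, each sharpening one additional structural property.

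First, I would apply \cref{lem:sameTypeBiclique} with a sufficiently large parameter $k_1$ to obtain a tidy sub-drawing of $H_{k_1}$, with $A$-side $A'$ and $B$-side $B'$, in which all inner $A'$-edges are of type \typei{$A'$} or all of type \typee{$A'$}, and analogously for the $B'$-edges. This uniform-type condition is the key structural property that ultimately forces the eventual grid to be tidy, because it means $A'$-edges enter (or avoid) the convex hull of $A'$ through a single controllable region, and likewise for $B'$.

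Second, I would extract a large grid from this uniform sub-drawing. Since the drawing is tidy, no two $A$-edges and no two $B$-edges cross, so every crossing between two edges of the underlying $K_{k_1,k_1}$ (viewed as two-segment paths through their subdivision vertices) is a crossing of an $A'$-segment with a $B'$-segment. Applying \cref{lem:pwCrossingEdges} to this topological drawing of $K_{k_1,k_1}$ yields a set of $k_2$ pairwise crossing edges of $K_{k_1,k_1}$, which split into pairwise crossing families of $A'$-edges and $B'$-edges. \cref{lem:kgrid} then produces a $k_3$-grid with one side in $A'$ and the opposite side in $C$, and analogously for $B'$. Tidiness of this grid follows from the uniform-type property: the crossing points $x^{A'}_i$ all lie on the same side of the convex hull of the $A'$-side of the grid, so the segments $a_i x^{A'}_i$ stay within the $A'_{1,k_3}$-corridor, and likewise for $B'$.

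Third---and this is the main obstacle---I would refine the $k_3$-grid to a dotted $k$-subgrid. The restricted copy of $H_{k_3}$ on the $A'$- and $B'$-sides of the grid carries $k_3^2$ subdivision vertices, each lying in some cell of the grid (or on its boundary). Coloring each pair $(a_i,b_j)$ by the cell containing the subdivision vertex $c_{ij}$ yields a coloring of the edges of the underlying $K_{k_3,k_3}$ with at most $k_3^2$ colors, and an iterated pigeonhole argument along rows and columns (in the spirit of \cref{lem:bipartiteRamsey}) then selects index subsets $I,J \subseteq [k_3]$ of size $k$ such that every cell of the induced $k$-subgrid is hit by at least one subdivision vertex. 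The challenge is that $k_3$ must be very large relative to $k$ for this dotted refinement to succeed; composing the blowups---$48^{(\cdot)^2}$ from \cref{lem:sameTypeBiclique}, $2^{c_1(\log\cdot)^2}$ from \cref{lem:pwCrossingEdges}, the $k\,2^{k-1}$ factor from \cref{lem:kgrid}, and the final dotted reduction---yields the stated bound $n \geq 2^{c_2 k^4 2^{8k}}$.
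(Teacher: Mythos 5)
Your first step (apply \cref{lem:sameTypeBiclique} to force uniform $A'$- and $B'$-types) matches the paper, and your argument for why uniform type forces the resulting grid to be tidy is essentially the one the paper uses (the paper needs a careful four-case analysis of how the $A_{1,k}$-corridor relates to the convex hull of $A$, illustrated in \cref{fig:sameSideCorridors}, but your description captures the spirit). However, there are two real gaps.

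First, a moderate gap in going from pairwise crossing edges of the underlying $K_{k_1,k_1}$ to a red/blue grid. \Cref{lem:pwCrossingEdges} gives you a set $S$ of independent pairwise crossing \emph{two-segment paths} $a x_{ab} b$. When two such paths $ab, a'b'\in S$ cross, the crossing is realized either by $ax_{ab}$ meeting $b'x_{a'b'}$, or by $a'x_{a'b'}$ meeting $bx_{ab}$, and which one occurs can vary from pair to pair. So the $A'$-segments of $S$ need not pairwise cross the $B'$-segments of $S$, and you cannot directly feed $S$ into \cref{lem:kgrid}. The paper handles this by splitting $S$ into two halves $S_1,S_2$ and applying the bipartite Ramsey lemma (\cref{lem:bipartiteRamsey}) once more to the $2$-coloring of $S_1\times S_2$ by crossing type, obtaining $S_1',S_2'$ of size $k_2$ where the crossing type is uniform; only then do the $A'$-segments of one half cross the $B'$-segments of the other, giving the input to \cref{lem:kgrid}. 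Your phrase \enquote{which split into pairwise crossing families of $A'$-edges and $B'$-edges} asserts what actually needs this extra Ramsey step.

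Second, and more seriously, your approach to dottedness would not succeed as described. You propose to color each pair $(a_i,b_j)$ by the cell containing the subdivision vertex $c_{ij}$ and apply an iterated pigeonhole/Ramsey argument. But Ramsey-type arguments produce \emph{monochromatic} substructures---that is, large $I\times J$ with all $c_{ij}$ in the \emph{same} cell---which is the opposite of what dottedness requires (one vertex per cell of the subgrid). There is no generic pigeonhole argument that converts an arbitrary cell-assignment into a perfectly dotted subgrid. The key insight you are missing is that no such argument is needed: the paper's tidiness plus uniform-type property \emph{determine} the cell of each inner subdivision vertex. Specifically, for an inner pair $(a_i,b_j)$, the segments $a_ix$ and $b_jx$ (where $x=c_{ij}$) have the same type as the grid edges, so $a_ix$ is confined to the $A_{i-1,i+1}$-corridor until it would have to cross \emph{all} $B$-corridors, and similarly for $b_jx$; since $a_ix$ and $b_jx$ share the endpoint $x$ and cannot cross, $x$ must lie in the cell $A_{i-1,i+1}\cap B_{j-1,j+1}$. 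Taking a $(2k+1)$-grid and the subgrid on odd indices then automatically yields a dotted tidy $k$-grid, with the subdivision vertices of even-indexed pairs landing exactly one per cell. This geometric argument, not a further Ramsey reduction, is what makes the final bound achievable.
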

\begin{proof}\label{prf:tidyGrid}
	Let $k_1=2k+1$, $k_2=k_1 2^{k_1-1}$, $k_3=6^{k_2}$, and $k_4=2^{c_1(\log(2k_3))^2}$, where $c_1$ is the constant from \cref{lem:pwCrossingEdges}, and choose $c_2$ sufficiently large such that \[2^{c_2k^4 2^{8k}} \geq 2^{\log(48) c_1^2 (1+\log(6) (2k+1) 2^{2k})^4} = 48^{k_4^2}.\]
	Consider $n\geq 2^{c_2 k^4 2^{8k}}$ and a tidy drawing $D''$ of $H_n$.
	Let $A''$ and $B''$ denote the bipartition classes of the underlying topological drawing of $K_{n,n}$ and let $C''$ denote the set of subdivision vertices.
	By \cref{lem:sameTypeBiclique} there is a tidy drawing $D'$ of $H_{k_4}$ in $D''$ such that for its $A''$-side $A'$ all $A'$-edges are of type \typei{$A'$} or all $A'$-edges are of type \typee{$A'$} and for its $B''$-side $B'$ all $B'$-edges are of type \typei{$B'$} or all $B'$-edges are of type \typee{$B'$}.
	Let $K'$ denote the drawing of the underlying $K_{k_4,k_4}$ of $D'$.
	By \cref{lem:pwCrossingEdges}, there is a set $S$ of $2k_3$ independent, pairwise crossing edges in $K'$.
	Partition $S$ arbitrarily into two sets $S_1$ and $S_2$ of size $k_3$ each.
	For each edge $ab$ in $K'$ let $x_{ab}$ denote the corresponding subdivision vertex in $D'$.
	Since the drawing is tidy, there is no crossing of two $A'$-edges and no crossing of two $B'$-edges.
	So each crossing of edges $ab\in S_1$ and $a'b'\in S_2$ in $K'$ corresponds either to a crossing of $ax_{ab}$ and $b'x_{a'b'}$ (A) or a crossing of $a'x_{a'b'}$ and $bx_{ab}$ (B) in $D'$.
	Note that two edges in $K'$ might cross up to two times since we consider a geometric drawing of $H_n$ (so each edge of $K'$ has one bend).
	To find a large set of $A'$-edges crossing a large set of $B'$-edges consider the complete bipartite graph $K_S$ with bipartition classes $S_1$ and $S_2$.
	Each edge in this graph corresponds to a crossing of an edge from $S_1$ with an edge from $S_2$.
	Since $k_3=6^{k_2}$ there are, by \cref{lem:bipartiteRamsey}, sets $S'_1\subseteq S_1$ and $S'_2\subseteq S_2$ of size $k_2$ such that the crossings formed by edges from $S'_1$ with edges from $S'_2$ are all of type (A) or all of type (B).
	In the first case, the $A'$-edges coming from edges in $S'_1$ cross the $B'$-edges coming from $S'_2$.
	In the other case, the $A'$-edges coming from edges in $S'_2$ cross the $B'$-edges coming from $S'_1$.

	We now have a set of $k_2$ $A'$-edges and a set of $k_2$ $B'$-edges such that each $A'$-edge crosses each $B'$-edge.
	Since $k_2=k_1 2^{k_1-1}$ there is, by \cref{lem:kgrid}, a subset $S_A$ of these $A'$-edges and a subset $S_B$ of these $B'$-edges forming a $k_1$-grid $T$.
	This grid has, by construction, one side in $A'$ (and the opposite side in $C''$) and one side in $B'$ (and the opposite side in $C''$).
	Let $A$ and $B$ denote the $A'$-side and $B'$-side of $T$, respectively.
	An example is given in \cref{fig:untidyGrid}.
	It remains to prove that $T$ is tidy and contains a dotted tidy $k$-grid.

	To prove that $T$ is tidy, consider the $A_{1,k_1}$-corridor of $T$.
	Consider the edge $b\in S_B$ that lies on the boundary of this corridor (that is, $b$ is farthest away from $A$) and for each $a\in A$ let $x_a$ denote the crossing point of edge $b$ with the $A$-edge with endpoint $a$.
	Since $T$ is a $k_1$-grid, all points in $A$ lie on the same side of the straight line through $b$.
	In particular, $b$ does not cross the polygonal chain formed by $A$.
	Due to the choice of $K'$, all $A$-edges in $T$ are either of type \typei{$A'$} or all are of type \typee{$A'$}.
	This type is preserved with respect to $A$, except for the $A$-edges incident to the first and last vertex in $A$ which are always type \typee{$A$}.
	In particular, the $A$-edges do not cross the polygonal chain formed by $A$.
	Hence, the $A_{1,k_1}$-corridor of $T$ either contains the convex hull of $A$ entirely or is disjoint from its interior.
	This gives four different possibilities based on the type of $A$-edges in $T$ and the relative positions of the $A_{1,k_1}$-corridor and convex hull of $A$; \cref{fig:sameSideCorridors} shows an illustration.
	If the corridor contains the convex hull of $A$ and $A$-edges are of type \typei{$A'$} (Part (I) of \cref{fig:sameSideCorridors}), then all segments $ax_a$, $a\in A$, are contained in the $A_{1,k_1}$-corridor of $T$ as desired.
	The same holds if the corridor does not contain the convex hull of $A$ and $A$-edges are of type \typee{$A'$} (Part (III) of \cref{fig:sameSideCorridors}).
	The remaining two situations do not occur:
	If the corridor contains the convex hull of $A$ and $A$-edges are of type \typee{$A'$} (Part (II) of \cref{fig:sameSideCorridors}), then both endpoints of $b$ are not in convex position together with $A$, a contradiction to the fact that $H_n$ is tidy and hence $A\cup B$ is in convex position.
	If the corridor does not contain the convex hull of $A$ and $A$-edges are of type \typei{$A'$} (Part (IV) of \cref{fig:sameSideCorridors}), observe that the points in $A$ together with the crossing points of $b$ with the first and last $A$-edge of $T$ are in convex position (since also the first and the last $A$-edge of $T$ are of type \typei{$A'$} in the larger drawing $D'$).
	This shows that the $A_{1,k_1}$-corridor coincides with the convex hull of this set.
	Hence, the $A_{1,k_1}$-corridor contains the convex hull of $A$, contradicting the assumption.
	Altogether, the $A_{1,k_1}$-corridor of $T$ contains all segments $ax_a$, $a\in A$ in each possible case.
	Similar arguments applied to the $B$-side show that $T$ is tidy.

	It remains to show that $T$ contains a dotted tidy $k$-grid.
	To this end consider the drawing $D$ of $H_{k_1}$ in $D'$ between $A$ and $B$.
	By the choice of $D'$, the edges in $T$ and all the edges (of the underlying copy of $K_{k_1,k_1}$) of $D$ are of the same type.
	Let $a_1,\ldots,a_{k_1}$ and $b_1,\ldots,b_{k_1}$ denote the vertices of $A$ and $B$ in counterclockwise order, respectively.
	Let $T'$ denote the drawing formed by all edges in $T$ with an endpoint in $\{a_i\in A\colon i\text{ odd}\}$ or in $\{b_j\in B\colon j\text{ odd}\}$ and all vertices in $D$ that subdivide an edge $a_ib_j$ (of the underlying copy of $K_{k_1,k_1}$) where $1\leq i,j\leq k_1$ and $i$ and $j$ are even.
	See \cref{fig:dottedGrid}.
	By construction, $T'$ is a tidy $k$-grid since $k_1=2k+1$.
	We claim that $T'$ is dotted.
	Consider an arbitrary cell of $T'$.
	It corresponds to the intersection of an $A_{i-1,i+1}$-corridor and a $B_{j-1,j+1}$-corridor of $T$ for some even numbers $i$ and $j$.
	Let $x$ denote the (unique) vertex in $D$ adjacent to both $a_i$ and $b_j$ (the subdivision vertex of the edge $a_ib_j$ in the underlying copy of $K_{k_1,k_1}$).
	We claim that $x$ lies in this cell.
	The $A_{i-1,i+1}$-corridor of $T$ is a polygon with corners $x_{i-1}$, $a_{i-1}$, $a_{i}$, $a_{i+1}$, $x_{i+1}$ (where $x_{i-1}$ and $x_{i+1}$ are crossings in $T'$) where the sides $x_{i-1}a_{i-1}$ and $a_{i+1}x_{i+1}$ are part of $A$-edges.
	Since $A$-edges do not cross each other and $a_ix$ is of the same type as all $A$-edges in $T$, the edge $a_ix$ might leave the corridor only through the side $x_{i-1}x_{i+1}$.
	In particular, it crosses all $B$-corridors of $T'$ in this case.
	Similarly, the edge $b_jx$ cannot leave the $B_{j-1,j+1}$-corridor before it crosses all $A$-corridors of $T$.
	This shows that $x$ is in the desired cell of $T'$, as otherwise $a_ix$ and $b_jx$ cross (in that cell), which is not possible.
	See \cref{fig:dottedGrid} (right).
	Hence $T'$ is a dotted, tidy, $k$-grid.
\end{proof}

\begin{figure}
	\centering
	\includegraphics{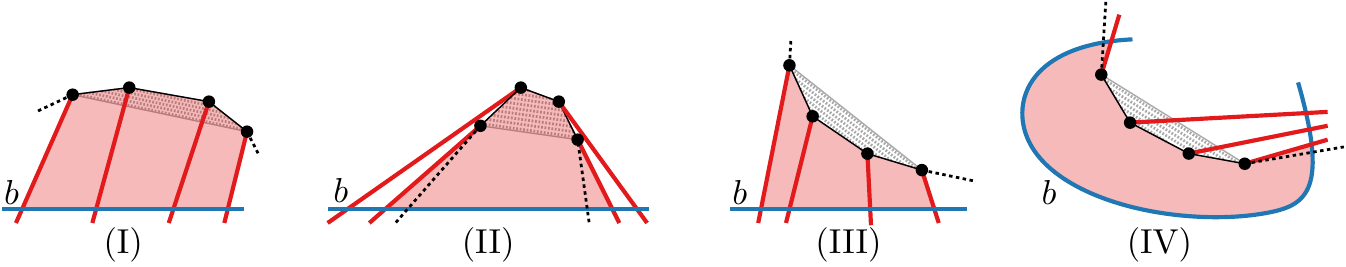}
	\caption{Relative positions of an $A_{1,k}$-corridor (light red area) and the convex hull of $A$ (dotted area): (I) The corridor contains the convex hull of $A$ and $A$-edges are of type \typei{$A'$}.
	(II) The corridor contains the convex hull of $A$ and $A$-edges are of type \typee{$A'$}.
	(III) The corridor does not contain the convex hull of $A$ and $A$-edges are of type \typee{$A'$}.
	(IV) The corridor does not contain the convex hull of $A$ and $A$-edges are of type \typei{$A'$}.}
	\label{fig:sameSideCorridors}
\end{figure}

\begin{figure}
	\centering
	\includegraphics{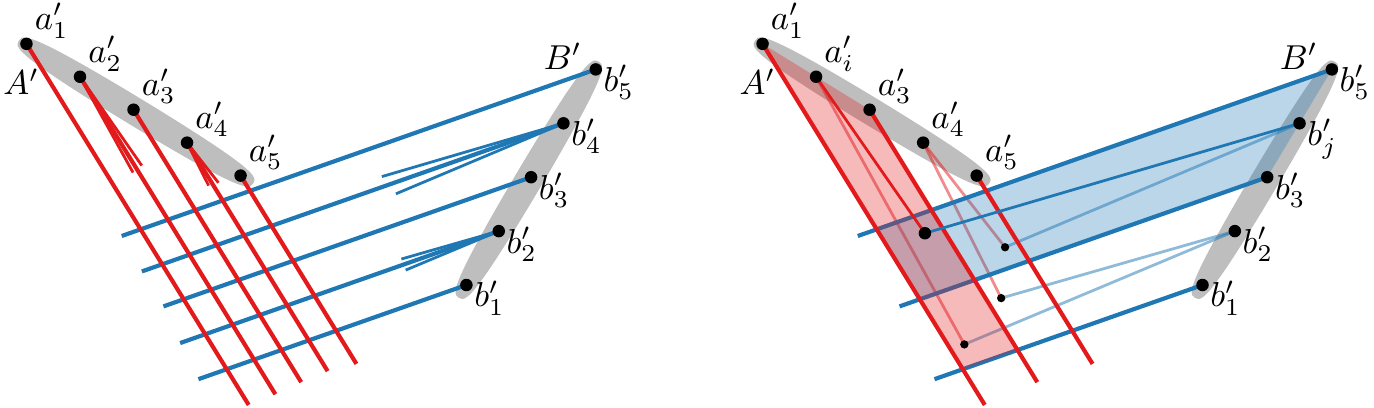}
	\caption{Left: A tidy $5$-grid such that the edges of $K_{5,5}$ between its sides are of the same type as the grid edges (here \typee{$A$} and \typei{$B$}, respectively).
	Right: This grid contains a dotted, tidy $3$-grid formed by edges with odd indices in $A$ and $B$.}
	\label{fig:dottedGrid}
\end{figure}

\subsection{Structures within the grid}

Next, we will consider connections between the vertices inside of a dotted grid.
To find such connections running in certain directions within the grid, we shall use a Ramsey type argument, summarized in the following \cref{lem:gridRamsey}.
We will apply this lemma in such a way that the mentioned color $r$ corresponds to connections within the grid.
For positive integers $k$ and $t$ let $\Gamma(k,t)$ denote the graph whose vertex set consists of disjoint sets $V_i^j$, $i$,$j\le k$, on $t$ vertices each, such that $u\in V_i^j$ and $v\in V_p^q$ are adjacent if and only if $i\neq p$ and $j\neq q$.
See \cref{fig:complementGridGraph} (left) for an illustration of $\Gamma(5,4)$.
Let $r\geq 3$.
We call an $r$-coloring of $E(\Gamma(k,t))$ \lightdfn{admissible} if each monochromatic copy of $K_5$ is of color $r$ and any path $uvw$ is not monochromatic in some color $c$ with $3\leq c<r$ in case $u\in V_i^j$, $v\in V_p^q$, and $w\in V_x^y$ with $1\leq i<p<x\leq k$ and with $1\leq j<q<y\leq k$ or $1\leq y<q<j\leq k$.
Loosely speaking, $\Gamma(k,t)$ is the $t$-blowup of the complement of a $k\times k$-grid graph, and an $r$-coloring is admissible if any monochromatic copy of $K_5$ has color $r$ and each monotone monochromatic path on at least two edges is colored with some color in $\{1,2,r\}$.
Given $i$ and $j$, the \lightdfn{$(i,j)$-quadrants of $\Gamma(k,t)$} are the four subgraphs induced by $\bigcup\limits_{p<i,q<j}V_p^q$, $\bigcup\limits_{p<i,q>j}V_p^q$, $\bigcup\limits_{p>i,q<j}V_p^q$, and $\bigcup\limits_{p>i,q>j}V_p^q$, respectively.
See \cref{fig:complementGridGraph} for an illustration.

\begin{figure}
	\centering
	\includegraphics{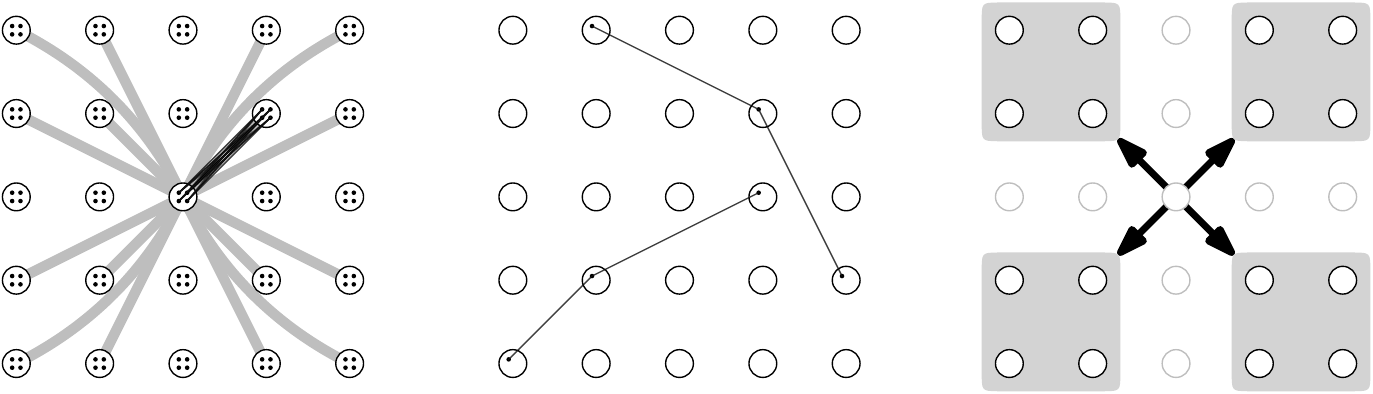}
	\caption{Left: An illustration of $\Gamma(5,4)$.
	Only edges incident to the central vertices are sketched.
	Middle: Two monotone paths in $\Gamma(5,4)$.
	Right: The four $(3,3)$-quadrants.}
	\label{fig:complementGridGraph}
\end{figure}

\begin{restatable}{lemma}{RestateGridRamsey}\label{lem:gridRamsey}
	Let $r$ and $t$ denote positive integers.
	There is a constant $c_3$ such that for each $k\geq c_3$ and each admissible $r$-coloring of $E(\Gamma(k,t))$ there are $i$, $j\le k$ such that each vertex in $V_i^j$ is incident to four edges of color $r$ with endpoints in different $(i,j)$-quadrants.
\end{restatable}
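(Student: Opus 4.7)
The plan is a contradiction argument combined with a Ramsey-theoretic extraction: suppose the conclusion fails, produce a long monotone chain of ``failing'' vertices whose pairwise edges avoid color $r$, and derive a forbidden configuration from admissibility via the multicolor Ramsey theorem. Suppose, for contradiction, that for every $(i,j)\le k$ some vertex $v_{ij}\in V_i^j$ is incident to no color-$r$ edge with endpoint in some quadrant $D_{ij}\in\{\mathrm{UL},\mathrm{UR},\mathrm{LL},\mathrm{LR}\}$; then every edge from $v_{ij}$ to vertices of its $D_{ij}$-quadrant has color in $\{1,\ldots,r-1\}$. By pigeonhole, at least $k^2/4$ of the pairs share a common failing direction; up to the obvious symmetries I take this direction to be $\mathrm{UR}$, and let $S\subseteq[k]\times[k]$ be the corresponding set.

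Next I would extract a long monotone chain from $S$. Under the NE-ordering $(i,j)\preceq(i',j')$ iff $i\le i'$ and $j\le j'$, every antichain of $[k]\times[k]$ has size at most $k$, so Mirsky's theorem yields a chain $(i_1,j_1)\prec\cdots\prec(i_N,j_N)$ in $S$ with $N\ge |S|/k\ge k/4$. Fix the corresponding failing vertices $w_a=v_{i_a j_a}$. For any $a<b$ the vertex $w_b$ lies in the $\mathrm{UR}$-quadrant of $w_a$, so the edge $w_aw_b$ has color in $\{1,\ldots,r-1\}$; moreover any three chain vertices $w_a,w_b,w_c$ with $a<b<c$ form a monotone path, so by admissibility the color classes $\{3,\ldots,r-1\}$ are triangle-free on $\{w_1,\ldots,w_N\}$, while color classes $1$ and $2$ contain no $K_5$.

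Finally, set $R^{*}=R(5,5,3,\ldots,3)$ (the multicolor Ramsey number with two $5$s and $r-3$ threes), which is finite, and choose $c_3=4R^{*}$. For $k\ge c_3$ we have $N\ge R^{*}$, so the induced $(r-1)$-coloring of the complete graph on the $w_a$'s must contain either a monochromatic $K_5$ in color $1$ or $2$, or a monochromatic triangle in some color in $\{3,\ldots,r-1\}$ --- both forbidden by admissibility. The main subtlety, and the step I would spend most care on, is recognising that a monochromatic triangle on a monotone chain \emph{is} a forbidden monochromatic monotone path on three vertices; this is what makes triangle-avoidance (rather than something weaker) the right Ramsey hypothesis for colors $3,\ldots,r-1$. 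Note that $t$ never enters the argument, so $c_3$ depends only on $r$.
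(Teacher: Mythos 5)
Your argument is correct in its essentials and takes a genuinely different---and arguably cleaner---route than the paper's. The paper's proof is constructive: it runs a blowup Ramsey argument with $r^{t^2}$ colors inside a diagonal copy of $K_R^t$ (so that for \emph{every} index $\ell\le t$ the $\ell$-th vertices of five blocks span a monochromatic $K_5$ of color $r$), producing a block $V_i^j$ all of whose vertices have $r$-colored edges into two opposite quadrants, and then iterates this on a ``diagonal of diagonals'' to cover the remaining two quadrants; this forces $c_3$ to grow with $t$. You instead argue by contradiction: pick one failing vertex per cell, pigeonhole on the failing direction, extract a long monotone clique, and apply the plain multicolor Ramsey number $R(5,5,3,\ldots,3)$. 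Since you only ever need one vertex per cell, your $c_3$ is independent of $t$---a strict improvement over the paper's bound. The ``main subtlety'' you single out (a monochromatic triangle on a strictly monotone chain \emph{is} a forbidden monotone monochromatic path) is exactly the right observation and is applied correctly.

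There is one technical slip in the chain-extraction step that you should repair. You invoke Mirsky (you mean Dilworth) with the \emph{non-strict} product order $\preceq$, whose antichains in $[k]\times[k]$ indeed have size at most $k$. But a chain under $\preceq$ may contain pairs $(i,j)\preceq(i,j')$ that agree in one coordinate; the corresponding vertices of $\Gamma(k,t)$ are then \emph{not} adjacent (adjacency in $\Gamma$ requires distinct row and distinct column), so they carry no edge color and your Ramsey argument on a complete graph does not apply. You need the chain to be strictly increasing in both coordinates. This is easy to fix: pass to the strict product order $(i,j)\sqsubset(i',j')$ iff $i<i'$ and $j<j'$. Its maximum antichain in $[k]\times[k]$ has size $2k-1$ (the $2k-1$ diagonals $\{(a,b)\colon b-a=d\}$ form a chain cover, and the hook $\{(1,j)\colon j\le k\}\cup\{(i,1)\colon i\le k\}$ is an antichain of that size), so Dilworth gives a strictly increasing chain in $S$ of length at least $|S|/(2k-1)>k/8$. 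Taking, say, $c_3=8R^{*}$ instead of $4R^{*}$ then completes your proof exactly as you outline.
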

\begin{proof}\label{prf:gridRamsey}
	An ordered graph is a graph together with a fixed linear order $<$ of vertices.
	Let $R$ denote the $(r^{(t^2)})$-color Ramsey number of $K_5$.
	We shall prove that $c_3=R^2$ is sufficient.
	To this end we first prove that for any admissible coloring of $\Gamma(R^2,t)$ every \enquote{diagonal} of length at least $R$ contains a vertex with two edges of color $r$ in two opposite quadrants.
	Then we choose $R$ such vertices from $R$ different diagonals to find a vertex with two additional edges of color $r$ into the other quadrants.

	Consider the $t$-blowup $K_R^t$ of a complete graph on $R$ vertices, that is, the vertex set consists of $R$ disjoint sets $U_1,\ldots,U_R$ of size $t$ each and two vertices are adjacent if and only if they are not in the same set $U_i$.
	Fix an ordering $<$ of the vertices of $K_R^t$ such that for $u\in U_i$ and $v\in U_j$ with $i<j$ we have $u<v$.
	(Within each set $U_i$, $i\le R$, the ordering is arbitrary.)
	Let $\Phi$ denote an $r$-coloring of $E(K_R^t)$ where each monochromatic copy of $K_5$ in $K_R^t$ has color $r$.
	There are $r^{(t^2)}$ possible colorings of the $t^2$ edges between any two vertex sets $U_i$ and $U_{i'}$.
	Thus, we can consider $\Phi$ as an $r^{(t^2)}$-coloring of $E(K_R)$.
	By the choice of $R$, there are integers $j_1,\ldots,j_5\le R$ such that $U_{j_1},\ldots,U_{j_5}$ span a monochromatic $K_5$ in the $r^{(t^2)}$-coloring of $K_R$.
	Hence, for every $\ell\le t$ the $\ell$'th vertices of $U_{j_1},\ldots,U_{j_5}$ induce a monochromatic copy of $K_5$ in $K_R^t$.
	By assumption on $\Phi$, this copy is of color $r$ for each $\ell\le t$.
	In particular, for each vertex $v\in U_{j_2}$ there are vertices $u\in U_{j_1}$, $w\in U_{j_3}$ with $\Phi(uv)=\Phi(vw)=r$.

	Now consider an admissible $r$-coloring $\Phi$ of $E(\Gamma(R^2,t))$.
	For each $d\le R$ let $\Gamma_d$ denote a specific \enquote{diagonal}, namely the copy of $K_R^t$ in $\Gamma(R^2,t)$ induced by vertex sets $V_i^j$ with $i$, $j\in\{(d-1)R+1,\ldots,dR\}$ and $i+j=(2d-1)R+1$.
	See \cref{fig:gridRamsey} (left \& middle).
	Observe that for each set $V_i^j$ in such a diagonal, the other vertices of the diagonal are contained in at most two $(i,j)$-quadrants of $E(\Gamma(R^2,t))$.
	For each $d$, each monochromatic copy of $K_5$ in $\Gamma_d$ under $\Phi$ is of color $r$.
	Hence, as argued above, for each $d$ there is a some set $U_d\in\{V_i^j\colon i, j\in\{(d-1)R+1,\ldots,dR\}, i+j=(2d-1)R+1\}$ in $\Gamma_d$ such that each vertex in $U_d$ is incident to two edges of color $r$ within $\Gamma_d$ with endpoints in opposite quadrants.
	Now consider the \enquote{diagonal} $\Gamma_0$ in $E(\Gamma(R^2,t))$ induced by $U_1,\ldots,U_R$.
	Observe that for each set $V_i^j = U_d$ in this diagonal there are exactly two opposing $(i,j)$-quadrants containing the remaining vertices of $\Gamma_0$ and these quadrants are different from the quadrants considered for the $r$-colored edges incident to $V_i^j = U_d$ within $\Gamma_d$.
	Again, the arguments from above yield a set $U\in\{U_1,\ldots,U_R\}$ such that each vertex in $U$ is incident to two edges of color $r$ within $\Gamma_0$ with endpoints in opposite quadrants.
	See \cref{fig:gridRamsey} (right).
	Hence, each vertex in $U$ is incident to four edges of color $r$ with endpoints in different quadrants.
\end{proof}

\begin{figure}
	\centering
	\includegraphics{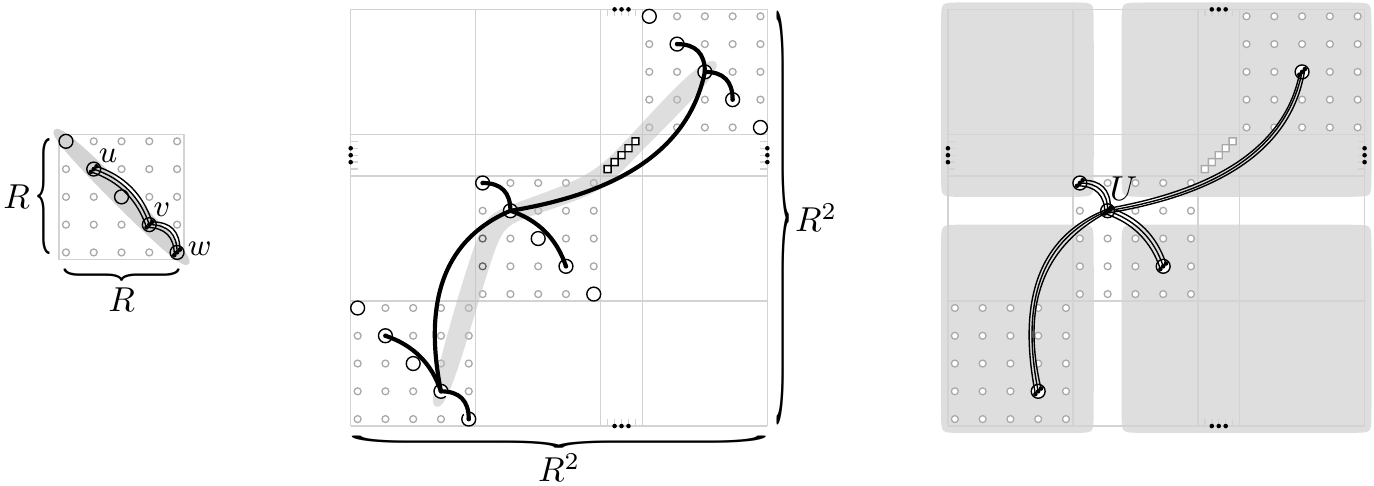}
	\caption{Left: A diagonal in $\Gamma(R,t)$ with monochromatic path $uvw$.
	Middle: In $\Gamma(R^2,t)$ we choose $R$ copies of $\Gamma(R,t)$ (with respective diagonals), arranged in diagonal fashion.
	Right: This leads to a set $U$ of vertices with edges of the same color into each of the four quadrants.}
	\label{fig:gridRamsey}
\end{figure}

We also use the following bound on Erd\H{o}s--Szekeres numbers.

\begin{lemma}[\cite{HMPT20}]\label{lem:ErdSzek}
	There is a constant $c_4$ such that for each positive integer $k$ each set of $2^{k+c_4 \sqrt{k\log k}}$ points in general position in the plane contains a subset of $k$ points in convex position.
\end{lemma}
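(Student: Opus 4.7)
The plan is to follow the classical framework of Erd\H{o}s and Szekeres and then apply a bootstrap argument in the style of Suk. First, I would prove the Erd\H{o}s--Szekeres cup-cap theorem: in any set of $\binom{a+b-2}{a-1}+1$ points in general position, ordered by $x$-coordinate, there is either an $a$-cup (a convex chain whose slopes increase) or a $b$-cap (a convex chain whose slopes decrease). This follows by a standard double induction, partitioning points according to whether they are the right endpoint of a long enough cup and iterating. Setting $a=b=k$ already yields $k$ points in convex position from any $\binom{2k-4}{k-2}+1 \in \Theta(4^k/\sqrt{k})$ points---this is the classical bound, with exponent roughly $2k$, which is weaker than what the lemma asserts.

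To bring the exponent down to $k + O(\sqrt{k\log k})$, I would not try to extract one long convex chain directly but rather a moderate number of short convex chains that can be combined. Concretely, I would introduce a two-parameter Ramsey function $f(s,t)$ equal to the smallest $N$ such that every $N$-point set in general position contains $t$ interlocking $s$-cups arranged so that one can pick a transversal (one vertex per cup) forming a convex chain of length $s+t$. The classical cup-cap bound should contribute a multiplicative factor of roughly $\binom{2s}{s} = 2^{O(s)}$ to $f(s,t)$, while an extraction step that locates $t$ compatible cups should contribute only $2^{O(t)}$ by a careful selection argument. Balancing $s + t = k$ with $s \approx \sqrt{k \log k}$ then gives the desired estimate $N \le 2^{k + O(\sqrt{k \log k})}$.

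The main obstacle will be the stitching step: guaranteeing that the $t$ extracted $s$-cups sit in sufficiently compatible positions so that a length-$(s+t)$ convex chain can actually be assembled from a transversal. This requires a secondary Ramsey-type reduction on the relative slopes and positions of the cups (pigeonholing how pairs of cups \emph{see} one another), together with a careful double-counting that controls the loss incurred when passing from cups-of-cups to a single convex chain. Suk's original argument already handled this with an $o(k)$ error, and the Holmsen--Mojarrad--Pach--Tardos refinement sharpens precisely this step via a more economical merging lemma for cup families to yield the $O(\sqrt{k \log k})$ error in the exponent, matching the statement.
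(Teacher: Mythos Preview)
The paper does not prove this lemma at all; it is quoted as a black box from~\cite{HMPT20} and used later only as an input to the size estimates in \cref{thm:main}. So there is nothing in the paper to compare your proposal against.

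As a sketch of the argument in the cited literature, your outline is in the right spirit---cup-cap plus a Suk-style bootstrapping that combines many shorter convex chains---but it is too vague at the critical point. The real content of Suk's proof (and of the Holmsen--Mojarrad--Pach--Tardos refinement that gives the $O(\sqrt{k\log k})$ error term) is not a generic ``secondary Ramsey-type reduction on relative slopes'' but a specific positive-fraction cup-cap lemma: from a large point set one extracts, not a single long cup, but a sequence of clusters of positive density that together support many convex chains, and then one iterates inside one of those clusters. Your description of a function $f(s,t)$ counting ``interlocking cups admitting a transversal'' does not match how the actual argument is organized, and the balancing heuristic you give ($s\approx\sqrt{k\log k}$) is not how the parameters are set in either Suk or HMPT. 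If you want to actually reproduce the bound, you should follow the positive-fraction lemma route rather than inventing a new two-parameter Ramsey function; as written, your ``stitching step'' is where a real proof would have to do essentially all the work, and nothing in your sketch indicates how that step goes through.
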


Finally, we prove that the graph $G(n)$ described in the beginning of this section has geometric thickness at least $3$.

\begin{theorem}\label{thm:main}
	Let $k$, $m$, $n$ be integers with $k\geq c_3$ (with $c_3$ from \cref{lem:gridRamsey} for $r=11$ and $t=5$), $n\geq 2^{c_2 2^{10k^2}}$ (with $c_2$ from \cref{lem:tidyGrid}) and $m\geq 12^n$.
	For each $N \geq 2^{2m+c_4\sqrt{2m \log(2m)}}$ (with $c_4$ from \cref{lem:ErdSzek}) the graph $G(N)$ has geometric thickness at least $3$.
\end{theorem}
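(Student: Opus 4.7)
Suppose for contradiction that $G(N)$ admits a straight-line drawing whose edge set decomposes into two plane subgraphs $\mathbb{A}$ and $\mathbb{B}$; by a generic perturbation we may assume $V(G(N))$ is in general position. Since $N \ge 2^{2m + c_4\sqrt{2m\log(2m)}}$, \cref{lem:ErdSzek} yields $2m$ vertices of $\Lambda_0$ in convex position, which we split into two separated arcs $A$ and $B$ of size $m$ along the convex hull. Each pair $(a,b) \in A \times B$ is connected via a subdivision vertex $v_{a,b} \in \Lambda_1$, and the two edges $av_{a,b}$, $bv_{a,b}$ fall into one of four patterns in $\{\mathbb{A},\mathbb{B}\}^2$. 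Treating these as a $4$-coloring of $E(K_{A,B})$ and using $m \ge 12^n = (3\cdot 4)^n$, \cref{lem:bipartiteRamsey} yields subsets $A' \subseteq A$, $B' \subseteq B$ of size $n$ on which a single pattern occurs. The two monochromatic patterns are ruled out, since they would embed a full $1$-subdivision of $K_{n,n}$ into $\mathbb{A}$ or $\mathbb{B}$ (impossible for $n \ge 3$). Up to relabeling, every $A'$-edge then lies in $\mathbb{A}$ and every $B'$-edge in $\mathbb{B}$, and the induced drawing of $H_n$ on $A' \cup B'$ is tidy.

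The hypothesis $n \ge 2^{c_2 2^{10k^2}}$ comfortably exceeds the threshold $2^{c_2 k^4 2^{8k}}$ in \cref{lem:tidyGrid} (since $10k^2 \ge 4\log_2 k + 8k$ for $k \ge 2$), so we extract a dotted tidy $k$-grid $T$ with $A$-side $\bar A \subseteq A'$, $B$-side $\bar B \subseteq B'$, and a distinguished $\Lambda_1$-vertex $v_{ij}$ in each cell $(i,j)$. The outer $\mathbb{A}$- and $\mathbb{B}$-edges of $T$ form its boundary, and planarity of $\mathbb{A}$ and $\mathbb{B}$ tightly constrains the ways in which an edge based in a cell of $T$ can leave~$T$.

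For the next step I would apply \cref{lem:gridRamsey} with $r = 11$ and $t = 5$. For every pair of cells $(i,j) \ne (p,q)$, classify the $89$ $\Lambda_2$-vertices joining $v_{ij}$ to $v_{pq}$ by whether both incident edges stay inside $T$ (the \emph{in-grid} class, color $r$) or escape through the boundary; a finite case analysis shows that the number of feasible escape types is bounded by ten. Pigeonholing across the $89$ copies then leaves at least $t = 5$ in-grid $\Lambda_2$-vertices per cell, forming sets $V_i^j$ and a copy of $\Gamma(k,5)$. Coloring each edge of $\Gamma(k,5)$ by the class of the corresponding $\Lambda_2$-vertex yields an $11$-coloring whose admissibility must then be verified via two arguments: any monochromatic $K_5$ must lie in color $r$, since five escape paths of the same type would violate planarity of $\mathbb{A}$ or $\mathbb{B}$; and any monotone monochromatic path of length $2$ in an ``oblique'' escape color contradicts the geometry of the grid cells. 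Once admissibility is in hand, $k \ge c_3$ lets \cref{lem:gridRamsey} deliver a cell $(i,j)$ and a set $Y = V_i^j$ of five $\Lambda_2$-vertices each joined by in-grid $\Lambda_3$-paths to partners in all four $(i,j)$-quadrants.

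The final stage realises the configuration of \cref{fig:lowerBoundSummary} (right). Since all five $y \in Y$ share the same two $\Lambda_1$-predecessors and each has four in-grid $\Lambda_3$-neighbours $y_1, \dots, y_4$ in the four quadrants, planarity of $\mathbb{A}$ and $\mathbb{B}$ together with the colors on $T$'s boundary forces each $y$ to be either \emph{horizontal} (left- and right-going edges both in $\mathbb{A}$) or \emph{vertical} (top- and bottom-going edges both in $\mathbb{B}$). A short local analysis then shows that five such vertices at a common location cannot be assigned horizontal/vertical types simultaneously without producing a monochromatic crossing among the $\Lambda_3$-edges, yielding the desired contradiction. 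The main obstacle will be the third paragraph: enumerating the escape types, giving each a color, and verifying the admissibility hypotheses of \cref{lem:gridRamsey} from planarity of $\mathbb{A}$ and $\mathbb{B}$. That is the step where the topological constraints of the tidy grid, the two plane subgraphs, and the combinatorics of $\Gamma(k,5)$ must be glued together most carefully.
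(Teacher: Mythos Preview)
Your first two paragraphs match the paper's opening exactly. The third paragraph, however, collapses what are in the paper \emph{two separate applications} of \cref{lem:gridRamsey} into one, and this is a real gap.

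The paper does not extract a tidy $k$-grid but a tidy $(k^2+1)$-grid; this is why the hypothesis reads $n\ge 2^{c_2 2^{10k^2}}$ rather than the $2^{c_2 k^4 2^{8k}}$ you invoke. The $(k^2+1)$-grid is then cut into $k^2$ many $(k+1)$-subgrids $T_i^j$. The \emph{first} round defines the $11$-coloring $\Phi$ on $\Gamma_1\cong\Gamma(k^2,1)$, whose vertices are the $\Lambda_1$-dots (one per cell), with the color of an edge $xx'$ determined by the majority behaviour among the $89$ linking $\Lambda_2$-vertices. Pigeonholing over the $89$ vertices only guarantees nine of \emph{some} class; it does not force the in-grid class. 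What forces color $11$ is proving $\Phi$ admissible and then applying \cref{lem:gridRamsey} inside each sub-$\Gamma(k,1)$: this yields, for every $(i,j)\in[k]^2$, an edge $xx'$ of color $11$, hence nine in-grid $\Lambda_2$-vertices between that specific pair $x,x'$, five of which share a cell by a further pigeonhole. Only now do we have sets $\tilde Y_i^j$ of five $\Lambda_2$-vertices, all with the \emph{same} predecessors $x,x'\in\Lambda_1$ and with $xy\in\mathbb{A}$, $x'y\in\mathbb{B}$ staying inside $T$. These become the vertex classes of $\Gamma(k,5)$, and the \emph{second} round colors its edges by the behaviour of the single linking $\Lambda_3$-vertex (not a $\Lambda_2$-vertex, as you wrote).

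Your sentence ``Pigeonholing across the $89$ copies then leaves at least $t=5$ in-grid $\Lambda_2$-vertices per cell'' is where the argument breaks: the $89$ vertices are attached to a \emph{pair} of cells, and nothing so far singles out the in-grid class. Without the first round you also lose the common predecessors $x,x'$, and these are essential in the endgame. The contradiction is not obtained among $\Lambda_3$-edges alone; the paper shows that three $\mathcal{A}$-vertices $y,y',y''\in Y$ would force the $\mathbb{A}$-edges $xy,xy',xy''$ (produced by the first round) to cross the $\mathbb{A}$-edges $yz,y'z',y''z''$ running along the $A$-corridor, and symmetrically for $\mathcal{B}$-vertices and $x'$. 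Hence $|Y|\le 4$, contradicting $|Y|=5$.
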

\begin{proof}
	Consider any geometric drawing of $G=G(N)$.
	We assume that the vertices are in general position, otherwise we can apply a small perturbation at the vertices to achieve this without introducing any new crossings.
	For the sake of a contradiction, suppose that there is a partition of $G$ into two plane subgraphs $\mathbb{A}$ and $\mathbb{B}$.
	We refer to the sets $\Lambda_0$, $\Lambda_1$, $\Lambda_2$, and $\Lambda_3$ as points sets like in the definition of $G$.
	Our proof proceeds as follows.
	We find a large tidy drawing of $H_n$ with base points in $\Lambda_0$ and subdivision vertices in $\Lambda_1$.
	\cref{lem:tidyGrid} guarantees a dotted grid in this drawing.
	Then we consider the connections of the vertices in the grid cells via $\Lambda_2$.
	We use \cref{lem:gridRamsey} to show that many connections stay within the grid and hence many vertices of $\Lambda_2$ lie in the grid as well.
	Finally, we consider the connections of vertices from $\Lambda_2$ within the grid and use \cref{lem:gridRamsey} again, to find a configuration of vertices from $\Lambda_2$ that leads to a contradiction.

	Consider the point set $\Lambda_0$.
	\Cref{lem:ErdSzek} yields a set $\Lambda_0'\subseteq \Lambda_0$ of $2m$ points in convex position, since $N\geq 2^{2m+c_4\sqrt{2m \log(2m)}}$.
	We consider the points in $\Lambda_0'$ in counterclockwise order with an arbitrary first vertex.
	Consider the copy of $H_m$ in $G$ between the set $A$ of the first $m$ vertices of $\Lambda_0'$ and the set $B$ of the last $m$ vertices of $\Lambda_0'$.
	The edges of the underlying copy of $K_{m,m}$ are of four different types: in $H_m$ they correspond to two edges from $\mathbb{A}$, or to two edges from $\mathbb{B}$, or one edge from $\mathbb{A}$ and one edge from $\mathbb{B}$ (where either the edge from $\mathbb{A}$ has an endpoint in $A$ and the edge from $\mathbb{B}$ has an endpoint in $B$ or vice versa).
	Since $m\geq 12^n$ there is, due to the bipartite Ramsey theorem (stated as \cref{lem:bipartiteRamsey} in \cref*{prf:tidyGrid}), a copy of $K_{n,n}$ with all edges of the same type, leading to a corresponding copy $H$ of $H_n$.
	Since $n\geq 3$, this type cannot be one of the types with edges only from $\mathbb{A}$ or only from $\mathbb{B}$ as both $\mathbb{A}$ and $\mathbb{B}$ are planar but $K_{3,3}$ is not.
	Without loss of generality, assume that all edges in $H$ incident to $A$ are in $\mathbb{A}$ and all edges incident to $B$ are in $\mathbb{B}$.
	Observe that $H$ is a tidy geometric drawing of $H_n$ since $\mathbb{A}$ and $\mathbb{B}$ are crossing-free and the sets $A$ and $B$ are separated (their convex hulls do not intersect and $A\cup B=\Lambda_0'$ is in convex position).
	Further note that $2^{2k^2}\geq (k^2+1)^4 2^8$ for $k\geq 4$.
	Hence $n\geq 2^{c_2 2^{10k^2}} \geq 2^{c_2 (k^2+1)^4 2^{8(k^2+1)}}$, and there is, by \cref{lem:tidyGrid}, a dotted tidy $(k^2+1)$-grid $T$ in $H$ with vertices from $\Lambda_1$ in the cells.

	Let $\Lambda_1'\subseteq \Lambda_1$ denote a set of vertices consisting of one vertex from each cell of $T$.
	Consider the graph $\Gamma_1$ with vertex set $\Lambda_1'$ where two vertices are adjacent if and only if they are in distinct rows and distinct columns of $T$.
	Then $\Gamma_1$ forms a copy of $\Gamma(k^2,1)$.
	We will define an edge coloring $\Phi$ of $\Gamma_1$ based on the drawing of the edges between $\Lambda'_1$ and $\Lambda_2$.
	Consider two vertices $x$, $x'\in \Lambda_1'$.
	There are $89$ vertices in $\Lambda_2$ adjacent to both $x$ and $x'$.
	We will distinguish $11$ different cases how the edges between such $y\in \Lambda_2$ and $x$, $x'$ are drawn.
	Then, by the pigeonhole principle, there will be nine vertices from $\Lambda_2$ with the same type of drawing of $xy$ and $x'y$.
	The cases are not disjoint from each other and we break ties arbitrarily.
	If there are nine vertices $y\in \Lambda_2$ with $xy$, $x'y\in E(\mathbb{A})$, then $\Phi(xx')=1$.
	If there are nine such vertices with $xy$, $x'y\in E(\mathbb{B})$, then $\Phi(xx')=2$.
	Now assume that there are no such nine vertices, so there are $73$ such vertices where one edge is from $\mathbb{A}$ and the other edge is from $\mathbb{B}$.
	These edges either leave $T$ or stay within $T$.
	If we have at least nine vertices that stay within $T$, we pick $\Phi(xx')=11$.
	Otherwise, we can assume that there are at least 65 vertices $y$, for which the bicolored path $xyx'$ leaves $T$.
	The cell containing $x$ is the intersection of an $A$-corridor and a $B$-corridor of $T$.
	So an edge $xy$ intersects the boundary of $T$ either at one of the two \enquote{ends} of the $A$-corridor (if $xy\in E(\mathbb{A})$) or at one of the two \enquote{ends} of the $B$-corridor (if $xy\in E(\mathbb{B})$).
	Similarly, an edge $x'y$ has four options to leave $T$.
	Also observe that each of $xy$ and $x'y$ can intersect the boundary of $T$ only once, see \cref{fig:nonStretchable}.
	The figure shows the boundary edges of $T$ and a supposedly straight-line segment $L'$ intersecting the boundary twice.
	This arrangement can't be realized by straight lines as the straight line through $L'$ intersects itself once or some other line twice otherwise.
	This gives $8$ possibilities how the intersections can be located (under the assumption that $xy$ and $x'y$ are not both in $\mathbb{A}$ and not both in $\mathbb{B}$).
	We use colors $3,\ldots,10$ to encode these possibilities.
	Whenever there is a set $\hat{Y}$ of nine vertices from $\Lambda_2$ such that the paths $xyx'$ have the same locations of intersections for all $y\in\hat{Y}$, the edge $xx'$ receives the corresponding color.
	If $xx'$ is neither colored with $1$,~$2$, or $11$, we have at least 65 vertices connected via leaving $T$, and therefore at least one of the eight possibilities how to leave $T$ occurs nine times.
	So $\Phi$ is well defined (up to breaking ties arbitrarily).

	\begin{figure}
		\centering
		\includegraphics{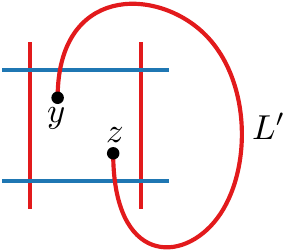}
		\caption{This arrangement is not realizable by straight-line segments, since the straight line through $L'$ does not intersect any of the other lines twice and does not intersect itself.}
		\label{fig:nonStretchable}
	\end{figure}

	We claim that $\Phi$ is admissible.
	We first prove that colors $3,\ldots,10$ do not induce a monotone monochromatic path on two edges.
	For the sake of a contradiction, suppose that there is such a path $xx'x''$.
	By symmetry, we assume that there are vertices $y$, $y'$ and edges $xy'$, $x'y\in E(\mathbb{A})$, and $x'y'$, $x''y\in E(\mathbb{B})$ such that $xy'$ and $x'y$ leave $T$ at the same sides of their respective $A$-corridors and $x'y'$ and $x''y$ leave $T$ at the same sides of their respective $B$-corridors.
	The situation is depicted in \cref{fig:noMonMonPath}.
	We claim that this arrangement is not stretchable.
	To see this consider the 4-cycle between the intersections of $xy', x''y$ and the grid boundary as depicted in~\cref{fig:noMonMonPath} (right).
	This cycle needs to be embedded as a quadrilateral.
	For two opposing corners (the depicted crossings $L_1$/$L_2$ and $L_x$/$L_{x''}$) we have to embed the edges such that the \enquote{stubs} lie in the inside of the quadrilateral.
	To achieve this for one corner we need an incident concave angle in the quadrilateral and hence the realization of the quadrilateral would require at least two concave angles, which is not possible.
	Hence, such an arrangement is not stretchable.
	As a consequence, the colors $3,\ldots,10$ do not induce a monotone monochromatic path on two edges.
	This immediately shows that these colors also do not induce a monochromatic copy of $K_5$.
	The color classes $1$ and $2$ correspond to subgraphs of the plane graphs $\mathbb{A}$ and $\mathbb{B}$, respectively.
	Hence, they do not induce monochromatic copies of $K_5$ as well.
	This shows that all monochromatic copies of $K_5$ are of color $r=11$.
	Therefore, $\Phi$ is admissible.

	\begin{figure}
		\centering
		\includegraphics{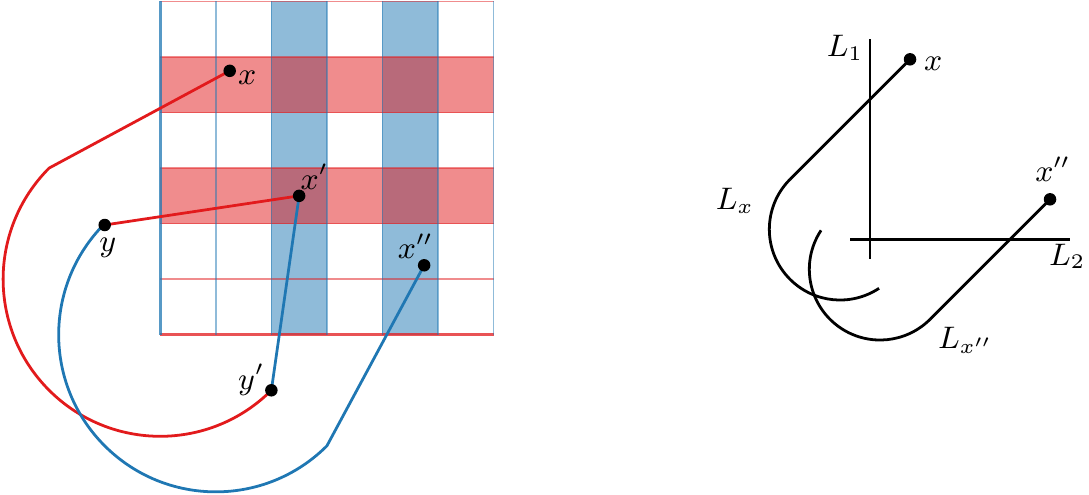}
		\caption{Left: A monotone path that is monochromatic under $\Phi$ in some color in $\{3,\ldots,10\}$.
		Note that it is not possible that $x'y$ and $x'y'$ intersect.
		Right: The edges from the left part forming an arrangement that can't be realized by straight-line segments.}
		\label{fig:noMonMonPath}
	\end{figure}

	Now divide the $(k^2+1)$-grid $T$ into $k^2$ many $(k+1)$-grids $T_i^j$, with $i$, $j,\le k$, where $T_i^j$ consists of the $A$-edges on position $(i-1)k+1,\ldots,ik+1$ (in the ordering of $A$) and the $B$-edges with positions $(j-1)k+1,\ldots,jk+1$ (in the ordering of $B$).
	See \cref{fig:splitGrid}.
	Let $\Gamma_i^j$ denote the subgraph of $\Gamma_1$ corresponding to $T_i^j$.
	Then $\Gamma_i^j$ is a copy of $\Gamma(k,1)$ and $\Phi$ is an admissible $11$-coloring of $\Gamma_i^j$.
	Consider some fixed $i$, $j\le k$.
	Due to the choice of $k$ there is, by \cref{lem:gridRamsey}, an edge $xx'$ in $\Gamma_i^j$ of color $r=11$ (we do not need the stronger statement of the \lcnamecref{lem:gridRamsey} here).
	Hence, there is a set $Y_i^j\in \Lambda_2$ of nine vertices such that for each $y\in Y_i^j$ the edges $xy$ and $x'y$ stay within $T$.
	Let $\mathcal{A}_x$ and $\mathcal{B}_x$ denote the $A$-corridor and $B$-corridor whose intersection forms the cell containing $x$.
	Similarly, let $\mathcal{A}_{x'}$ and $\mathcal{B}_{x'}$ denote the respective corridors for the cell containing $x'$.
	As argued above, edges within $T$ cannot leave their respective corridors.
	So each $y\in Y_i^j$ lies either in the cell $\mathcal{A}_x\cap \mathcal{B}_{x'}$ or in the cell $\mathcal{B}_x\cap \mathcal{A}_{x'}$.
	By the pigeonhole principle, there is a set $\tilde{Y}_i^j\subseteq Y_i^j$ of five vertices that lie in the same cell of $T$.
	Note that this cell is contained in $T_i^j$.

	\begin{figure}
		\centering
		\includegraphics{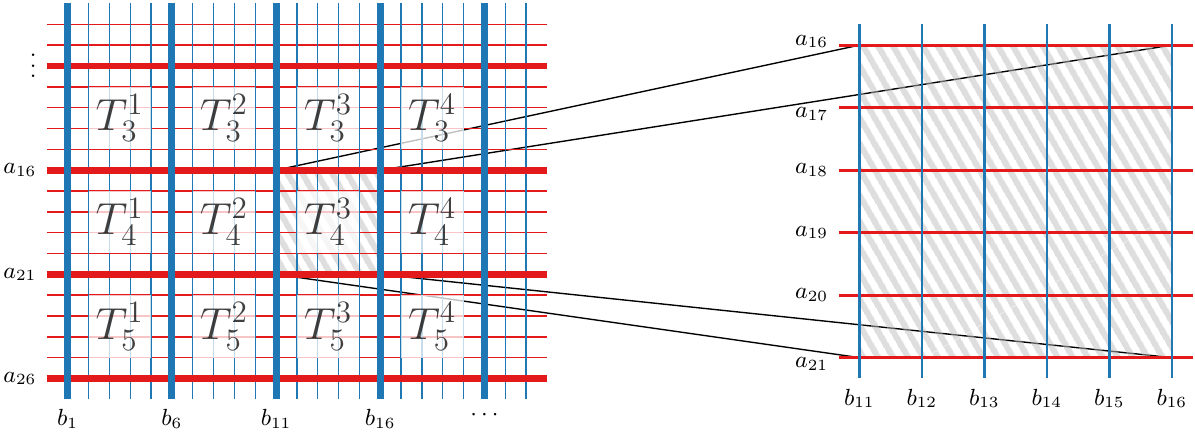}
		\caption[A k-squared-plus-one grid T contains k squared many k-plus-one subgrids.]
		{A $(k^2+1)$-grid $T$ contains $k^2$ many $(k+1)$-subgrids.
		(Here $k = 5$.)}
		\label{fig:splitGrid}
	\end{figure}

	Consider the copy of $\Gamma(k,5)$ whose vertex set consists of the union of all sets $\tilde{Y}_i^j$, with $i$, $j\le k$, where two vertices $y\in \tilde{Y}_i^j$ and $y'\in \tilde{Y}_{i'}^{j'}$ are connected if and only if $i\neq i'$ and $j\neq j'$.
	For any two vertices $y$, $y'\in V(\Gamma(k,5))$ there is a (unique) vertex in $\Lambda_3$ adjacent to both vertices.
	We define a coloring $\Psi$ of the edges of $\Gamma(k,5)$ similar to the coloring $\Phi$ above, except that the color of an edge $yy'$ in $\Gamma(k,5)$ is determined by the drawing of the unique edges $yz$ and $y'z$, $z\in \Lambda_3$ (instead of a set of nine edge pairs behaving identically).
	Then $\Psi$ is admissible by arguments similar to those applied for $\Phi$.
	Due to the choice of $k$ there are, by \cref{lem:gridRamsey}, indices $i$, $j\le k$ such that each vertex in $\tilde{Y}_i^j$ is incident to four edges of color $11$ under $\Psi$ with endpoints in different $(i,j)$-quadrants of $\Gamma(k,5)$.
	The situation is depicted in \cref{fig:fourDirections}.

	\begin{figure}
		\centering
		\includegraphics{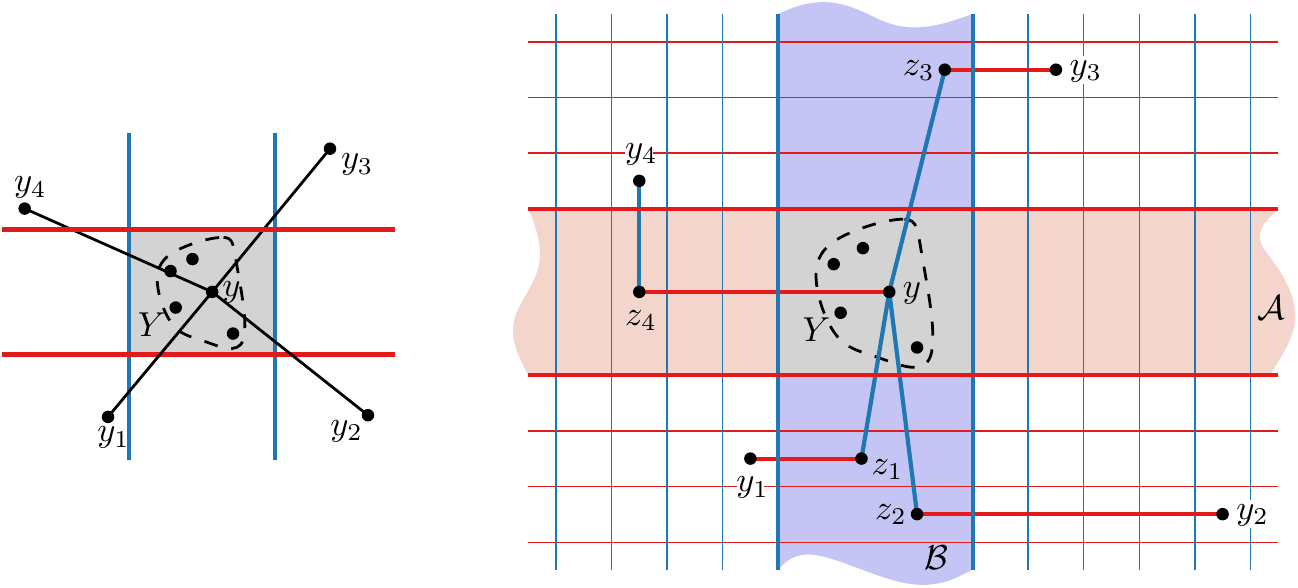}
		\caption{Left: Every vertex $y\in Y$ is incident to four edges in $\Gamma(k,5)$ of color $11$ with endpoints in different quadrants.
		Right: In $G$, each $y\in Y$ has two edges of the same type ($\mathbb{A}$/$\mathbb{B}$) that leave in the same direction relative to $y$; here $yz_1$ and $yz_2$.}
		\label{fig:fourDirections}
	\end{figure}

	Let $Y=\tilde{Y}_i^j$ for the specific indices $i$ and $j$ from above.
	Consider the $A$-corridor $\mathcal{A}$ and $B$-corridor $\mathcal{B}$ of $T$ whose intersection forms the cell containing the set $Y$.
	For a vertex $y\in Y$ consider four vertices $y_1,\ldots,y_4$ from different quadrants with $\Psi(yy_\ell)=11$, $\ell=1,\ldots,4$.
	Each edge $yy_\ell \in \Gamma(k,5)$ corresponds to two edges (of $G$) $yz_\ell$ and $y_\ell z_\ell$ for some $z_\ell\in \Lambda_3$ such that $z_\ell$ lies within $T$.
	In particular, $z_\ell$ lies either in $\mathcal{A}$ or in $\mathcal{B}$ but not in the cell containing $y$.
	As $y_1,\ldots,y_4$ are from four different quadrants, two of the vertices $z_1,\ldots,z_4$ lie in $\mathcal{A}$ or two lie in $\mathcal{B}$.
	Moreover, for either $\mathcal{A}$ or $\mathcal{B}$ two vertices lie on different \enquote{sides} of $y$ within the corridor.
	If for $y$ we have $\lvert\mathcal{A}\cap \{z_1,\ldots,z_4\}\rvert \geq 2$ and at least two of these vertices lie on different sides in $\mathcal{A}$ relative to $y$, we call $y$ an $\mathcal{A}$-vertex, otherwise we call $y$ a $\mathcal{B}$-vertex.

	To get a contradiction we now show that $Y$ contains at most two $\mathcal{A}$-vertices and at most two $\mathcal{B}$-vertices, which violates $\lvert Y\rvert=5$.
	Due to the choice of $Y\subseteq Y_i^j$, there are vertices $x$, $x'\in V(T_i^j)=V(\Gamma_i^j)$ such that there are edges $xy\in E(\mathbb{A})$ and $x'y\in E(\mathbb{B})$ with $\Phi(xy)=\Phi(x'y)=11$.
	That is, $xy\in \mathcal{A}$ and $x'y\in\mathcal{B}$.
	For the sake of a contradiction, suppose that there are three $\mathcal{A}$-vertices $y$, $y'$, $y''$ in $Y$.
	Then there are three vertices $\tilde{y}$, $\tilde{y}'$, $\tilde{y}''\in V(\Gamma(k,5))\subseteq \Lambda_2$ and three vertices $z$, $z'$, $z''\in\Lambda_3$ such that $yz$, $y'z'$, $y''z''\in E(\mathbb{A})$, $\tilde{y}z$, $\tilde{y}'z'$, $\tilde{y}''z''\in E(\mathbb{B})$, and $z$, $z'$, $z''$ lie in $\mathcal{A}$ on the same side relative to $y$, but not in $T_i^j$.
	By the same reasoning we can find three vertices $\tilde{z},\tilde{z}',\tilde{z}''$ such that $y\tilde{z},y'\tilde{z}',y''\tilde{z}''\in E(\mathbb{A})$, but now these vertices lie on the other side in $\mathcal{A}$ relative to $x$ (but also outside $T_i^j$).
	The edges $L=\{y\tilde{z},yz,y'\tilde{z}',y'z',y''\tilde{z}'',y''z''\}$ split $T_i^j$ in four zones.
	In one of these zones, $x$ has to be located.
	No matter which zone we pick, there will always be a crossing of an edge from $\{xy,xy',xy''\}\subseteq E(\mathbb{A})$ with an edge in $L\subseteq E(\mathbb{A})$ (see~\cref{fig:mainargument}), a contradiction.
	Consequently, there are no three $\mathcal{A}$-vertices in $Y$.

	\begin{figure}
		\centering
		\includegraphics{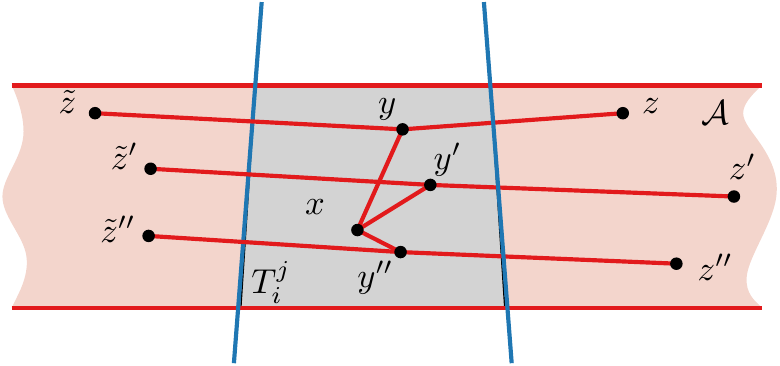}
		\caption{Construction in the proof of~\cref{thm:main}.
		Obtaining a monochromatic crossing at $xy$, $xy'$ or $xy''$ is unavoidable.}
		\label{fig:mainargument}
	\end{figure}

	Similarly, there are no three $\mathcal{B}$-vertices in $Y$.
	This contradicts $\lvert Y\rvert \geq 5$.
	Hence, the geometric thickness of $G$ is at least $3$.
\end{proof}

\Cref{thm:lowerBound} is a direct consequence of~\cref{thm:main}.

\clearpage


\section{Conclusions}

We proved that the largest geometric thickness among $2$-degenerate graphs is either $3$ or $4$, answering two questions posed by Eppstein~\cite{EppGeomThickness}.
It remains open to decide whether there is a $2$-degenerate graph of geometric thickness or geometric arboricity $4$.

Our proof of the lower bound shows a geometric thickness of at least $3$ for a tremendously large $2$-degenerate graph.
This is mainly due to using several rounds of Ramsey type arguments.
We make little attempts to reduce this size and there are several places in the proof where a smaller size could be attained easily, for instance by using better or more specific Ramsey numbers (\cref{lem:bipartiteRamsey}, \cref{lem:gridRamsey}).
In one step in the proof (\cref{lem:kgrid}) we are given a collection of red and blue straight-line segments in the plane and we need to find $k$ red segments and $k$ blue segments forming a grid combinatorially equivalent to $G_k$.
We need exponentially many segments to be given, however it seems that a linear number suffices.
An arrangement of $3k$ red segments and $3k$ blue segments without copy of $G_{k+1}$ is given in \cref{fig:noGk}.

\begin{figure}
	\centering
	\includegraphics{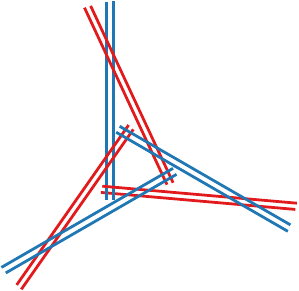}
	\caption{This arrangement of $3k$ red segments and $3k$ blue segments contains no copy of $G_{k+1}$.
	For each color and each slope there are $k$ parallel segments (here $k=2$ is depicted).}
	\label{fig:noGk}
\end{figure}

\begin{question}
	Given an arrangement of $3k$ disjoint red straight-line segments and $3k$ disjoint blue straight-line segments, where each red segment crosses each blue segment, are there always $k$ red segments and $k$ blue segments forming a grid combinatorially equivalent to $G_k$?
\end{question}

The $2$-degenerate graphs form a subclass of Laman graphs, which in turn form a subclass of all graphs of arboricity $2$.
Our lower bound gives a graph of geometric thickness $3$ in either of these classes.
However, for both larger classes it is unknown whether the geometric thickness is bounded by a constant from above.

\bibliographystyle{plainurl}
\bibliography{refs}

\end{document}